\documentclass[11pt,a4paper]{amsart}
\usepackage[utf8]{inputenc}
\usepackage{lmodern}
\usepackage{booktabs}
\usepackage[final]{hyperref}
\usepackage{esint, amsmath,amssymb,amsthm}
\usepackage[abbrev]{amsrefs}
\usepackage{a4wide}
\usepackage{tocvsec2}
\usepackage{enumerate}
\usepackage{wasysym}
\usepackage{multirow}
\usepackage{bigints}
\usepackage{color}
\usepackage[showonlyrefs]{mathtools}

\newcommand\ddfrac[2]{\frac{\displaystyle #1}{\displaystyle #2}}

\newtheorem{thm}{Theorem}[section]
\newtheorem{lem}[thm]{Lemma}
\newtheorem{prop}[thm]{Proposition}

\theoremstyle{definition}

\theoremstyle{remark}

\newtheorem{remark}[thm]{Remark}
\numberwithin{equation}{section}

\newcommand{\R}{{\mathbb R}}

\newcommand{\dif}{\,\mathrm{d}}

\hfuzz=5pt\vfuzz=3pt
\addtolength{\parskip}{0.3em}
\allowdisplaybreaks

\begin{document}

\title[Symmetry breaking of groundstates] {Quantitative symmetry breaking of groundstates for a class of weighted Emden-Fowler equations}

\author[C.\ Mercuri]{Carlo Mercuri}
\address[C.\ Mercuri]{Swansea University\\ Department of Mathematics\\ Fabian Way, Crymlyn Burrows, SA1~8 EN, Skewen, 
Swansea, Wales, United Kingdom}
\email{C.Mercuri@swansea.ac.uk}

\author[E.\ Moreira dos Santos]{Ederson Moreira dos Santos}
\address[E.\ Moreira dos Santos]{ Universidade de S\~ao Paulo\\ Instituto de Ci\^encias Matem\'aticas e de Computa\c{c}\~ao
 \\CEP 13566-590\\S\~ao Carlos - SP - Brazil}
\email{ederson@icmc.usp.br}

\keywords{Symmetry breaking, Liouville theorems, Best constants, Groundstate solutions.}

\subjclass[2010]{35B06; 35B07; 35J15; 35J61.}


\begin{abstract} We consider a class of weighted Emden-Fowler equations

\begin{equation} \tag{$\mathcal P_{\alpha}$} \label{eqab}
\left\{\begin{array}{ll}
-\Delta u=V_{\alpha} (x) \, u^p & \text{in} \,\,B,\\
u>0 & \text{in} \,\,B,\\
u=0 &  \text{on}\,\,\partial B,
\end{array}\right.
	\end{equation}
posed on  the unit ball $B=B(0,1)\subset \R^N$, $N \geq1$. We prove that symmetry breaking occurs for the groundstate solutions as the parameter $\alpha \rightarrow \infty.$
The above problem reads as a possibly large perturbation of the classical H\'enon equation. We consider a radial function $V_\alpha$ having a spherical shell of zeroes at $|x|=R \in (0,1].$ For $N \geq 3$, a quantitative condition on $R$ for this phenomenon to occur is given by means of universal constants, such as the best constant for the subcritical Sobolev's embedding $H^1_0(B)\subset L^{p+1}(B).$ In the case $N=2$ we highlight a similar phenomenon when $R=R(\alpha)$ is a function with a suitable decay. Moreover, combining energy estimates and Liouville type theorems we study some qualitative and quantitative properties of the groundstate solutions to (\ref{eqab}) as $\alpha \rightarrow \infty.$ \end{abstract}

\maketitle

\section{Introduction and results}
\settocdepth{section}

 For $R\in (0,1)$ and $\alpha>0$ we set
\begin{equation} \tag{$V$} \label{def-h}
V(r)=
\left\{\begin{array}{ll}
\left(1-\frac{r}{R}\right)^{\alpha} & \text{ if } 0\leq r< R, \smallskip\\
 \left(1-\frac{1-r}{1-R}\right)^\alpha &\text{ if } R\leq r\leq 1. \smallskip
\end{array}\right.
\end{equation}
We also set $V(|x|) = |x|^{\alpha}$ for $R= 0$ and $V(|x|) = (1- |x|)^{\alpha}$ for $R=1$. We will use $R$-subscripts and $\alpha$-subscripts in those instances where the dependence on $R$ and $\alpha$ plays any~role.
We consider the boundary value problem
\begin{equation} \tag{$\mathcal P$} \label{eq}
\left\{\begin{array}{ll}
-\Delta u=V(|x|) \, |u|^{p-1}u & \text{in} \,\,B,\\
u=0 &  \text{on}\,\,\partial B,
\end{array}\right.
	\end{equation}
on the unit ball $B=B(0,1)\subset \R^N$. Throughout the paper we shall consider $p$ superlinear and Sobolev-subcritical, namely $p\in\left(1,\frac{N+2}{N-2}\right)$ if $N \geq 3$ and $p>1$ if $N=1,2$.	

Note that (\ref{eq}) interpolates two opposite situations: with $R=1$  we obtain 
\begin{equation} \tag{$\mathcal{D}$} \label{eqGNN}
\left\{\begin{array}{ll}
-\Delta u=(1-|x|)^\alpha\, |u|^{p-1}u & \text{in} \,\,B,\\
u=0 &  \text{on}\,\,\partial B,
\end{array}\right.
	\end{equation}
whose positive solutions, in particular groundstates, are radially symmetric by the celebrated result of Gidas-Ni-Nirenberg \cite{Gidas}, whereas with $R= 0$ we are led to the H\'enon equation \cite{Henon}
\begin{equation} \tag{$\mathcal{H}$} \label{classH}
\left\{\begin{array}{ll}
-\Delta u=|x|^\alpha\, |u|^{p-1}u & \text{in} \,\,B,\\
u=0 &  \text{on}\,\,\partial B,
\end{array}\right.
	\end{equation}
whose groundstate solutions (which we know are positive) \cite{Smets, BW} are nonradial for large enough values of $\alpha$. Here we highlight, quantitatively, a stability phenomenon for the symmetry breaking of groundstate solutions. In fact, we find a condition on $R$, by means of universal constants, for the symmetry breaking of groundstate solutions of (\ref{eq}) to occur for any $\alpha$ suitably large. In the present context $N$ and $p$ are parameters which are fixed from the start. 
We recall that if $u$ is a groundstate solution of \eqref{eq}, with $\alpha\geq0$, then $|u|$ is also a groundstate solution. Then, by the strong maximum principle, $|u|>0$ in $B$ and hence either $u>0$ in $B$ or $u<0$ in $B$. So, throughout this paper, we will work with positive groundstate solutions.
We know from \cite[Theorem 2.6]{BWW} that any groundstate solution of \eqref{eq} is foliated Schwarz symmetric if $R\in [0,1)$ and radially symmetric and radially decreasing if $R=1$. Hence, we may assume that the maximum of any groundstate solution of \eqref{eq} is attained at a point of the form $(s_{\alpha}, 0, \ldots, 0) \in B$ where $s_{\alpha} \in [0,1)$.

Non-autonomous equations like (\ref{eq}) naturally arise in Astrophysics when describing the stability of stellar structures. 
One of the most remarkable predictions in General Relativity, see e.g. \cite{Wald} and \cite{Haw}, is the existence of black holes. These were known just as a mathematical entity since Schwarzschild's  stationary and spherically symmetric singular solution to Einstein's equations, until recent years when in fact black hole like objects have been perceived \cite{existBHGC}, and remarkably, photographed \cite{Pic-blackhole}. 

In the last few decades astrophysicists and mathematicians have devoted intensive effort to detect and understand the existence of black holes in globular clusters. In 1972, Peebles \cite{peebles1,peebles2} published seminal works describing a stationary distribution of stars near a massive collapsed object, such as a black hole, located at the cluster centre.

As shown in \cite{batt,batt-f-horst,li,li-santanilla,batt-li}, the admissibility of stationary and spherically symmetric stellar models, whose dynamics is based on coupling Vlasov and Poisson equations, is equivalent to the solvability of an Emden-Fowler type equation
\[
-\Delta U(x)  = f(|x|, U(x)) \quad \text{in} \quad \R^3.
\]
These equations are extensively considered in Astrophysics as a class of nonlinear Poisson equations describing certain self-gravitating, spherically symmetric stellar systems: the well-known Lane-Emden equation, written in PDE form as
\[
-\Delta U(x)  =|U(x)|^{p-2}U(x) \quad \text{in} \quad \R^3,
\]
describes stellar systems modelled as {\it polytropic} fluids, namely those where a nonlinear (polynomial) relation between the mass density, related to a certain power of $U,$ and the pressure holds; see e.g. the classical book of the 1983 Nobel in Physics laureate S. Chandrasekhar \cite{Stellar}. 
The particular case $f(|x|, U(x)) = |x|^{\alpha}|U(x)|^{p-2}U(x)$, $\alpha>0$, $p>2$, namely
\[
-\Delta U(x)  = |x|^{\alpha}|U(x)|^{p-2}U(x) \quad \text{in} \quad \R^3,
\]
has been proposed in 1973 by H\'enon \cite{Henon} in the context of the so-called concentric shell model, as a correction/generalisation to the Lane-Emden equation, to investigate numerically the stability of spherical steady state stellar systems.

When considering (\ref{classH}), namely the Dirichlet problem associated with the H\'enon equation, in \cite{Smets} it has been shown that at least two positive solutions exist for large values of $\alpha.$ Namely, a radial solution, and surprisingly a least energy solution which is not radially symmetric. Moreover in \cite{CaoPeng,BW} it has been shown that the presence of $|x|^\alpha,$ as $\alpha$ gets large, forces the least energy solutions to concentrate as far as possible from the origin (in fact near the boundary); see also \cite{pacella-indiana} and references therein. In the case of (\ref{eq}) for $R\in (0,1]$ in line with these results, one may naturally expect that, as $\alpha$ gets large, concentration should occur far away from the shell of zeroes of $V,$ $\{x\,:\, |x|=R\},$ namely either at the origin or at the boundary.

\subsection*{Notation.}  $B_\rho=B(0,\rho) \subset \R^N$ is the ball centred at the origin with radius $\rho>0.$ We set $B=B_1,$ whose boundary has $(N-1)$-dimensional Lebesgue measure given in terms of the classical Gamma-function by 
\[
|\partial B|=|\mathbb S^{N-1}|= \frac{2\pi^{N/2}}{\Gamma(N/2)}.
\]
Solutions to \eqref{eq} are critical points of
\begin{equation}\tag{$\mathcal I$} \label{Energy}
I(u)= \frac{1}{2}\int_{B} |D u|^2 \dif x-\frac{1}{p+1}\int_B V(|x|)|u|^{p+1}\dif x, \quad u \in H^1_0(B),
\end{equation}
and its groundstates can be found by suitable rescaling of optimisers to the best constant
\begin{equation}\tag{$\mathcal S_{\alpha}$}\label{S-nonradial}
S_{\alpha}= \inf_{0\neq u\in H^1_0(B)} \ddfrac{\int_B |D u|^2 \dif x}{\left(\int_B V(|x|)|u|^{p+1}\dif x\right)^{\frac{2}{p+1}}},
\end{equation}
and its least energy radial solutions correspond to rescaling of optimisers to the constant
\begin{equation}\tag{$\mathcal S_{\alpha,\textrm{rad}}$} \label{S-radial}
S_{\alpha,\textrm{rad}}= \inf_{0\neq u\in H^1_{0,\textrm{rad}}(B)} \ddfrac{\int_B |D u|^2 \dif x}{\left(\int_BV(|x|)|u|^{p+1}\dif x\right)^{\frac{2}{p+1}}},
\end{equation}
which are achieved by standard direct minimisation in $H^1_0(B)$ and, respectively in $H^1_{0,\textrm{rad}}(B).$
In particular, we consider a positive optimiser $u^*_{\alpha,\textrm{rad}}$ to $S_{\alpha,\textrm{rad}}$ such that $$\int_B V(|x|)|u^*_{\alpha,\textrm{rad}}|^{p+1}\dif x=1.$$ Then by the Palais symmetric criticality principle
\begin{equation} \label{eq minimiser}
-\Delta u^*_{\alpha,\textrm{rad}}=S_{\alpha,\textrm{rad}} V(|x|) \, (u^*_{\alpha,\textrm{rad}})^p
\end{equation}
holds weakly in $H^1_0(B).$ Since the rescaled function $ u_{\alpha,\textrm{rad}}=\left(S_{\alpha,\textrm{rad}}\right)^{\frac{1}{p-1}}u^*_{\alpha,\textrm{rad}}$ solves
\begin{equation} \label{eq rescaled minimiser}
-\Delta u_{\alpha,\textrm{rad}}=V(|x|) \, (u_{\alpha,\textrm{rad}})^p
\end{equation}
weakly in $H^1_0(B),$
it is easy to see that the relation between the energy level
\begin{equation}\tag{$\mathcal C_{\alpha, \textrm{rad}}$} \label{least energy}
C_{\alpha, \textrm{rad}}= I_\alpha(u_{\alpha,\textrm{rad}})
\end{equation}
and $S_{\alpha,\textrm{rad}}$ is
\begin{equation} \label{relation}
S_{\alpha,\textrm{rad}}=\left(\frac{2(p+1)}{p-1}\right)^{\frac{p-1}{p+1}} \left(C_{\alpha, \textrm{rad}}\right)^{\frac{p-1}{p+1}}.
\end{equation}
We will extensively refer to this relation, as well as to such a positive radial solution $u_{\alpha,\textrm{rad}}$ to (\ref {eq}). 
The same scaling argument using a positive optimiser $u^*_\alpha$ to $S_\alpha$ allows one to define $u_\alpha=S_\alpha ^{\frac{1}{p-1}}u^*_\alpha.$ We refer to such $u_\alpha$ as a {\it groundstate solution}  to (\ref{eq}).  It is well-known that, among all the nontrivial critical points of $I_\alpha,$ $u_\alpha$ has least energy, and that $C_{\alpha}= I_\alpha(u_{\alpha})$ is in particular a mountain pass level for $I_\alpha.$ Finally we set
\begin{equation}\tag{$\mathcal S$} \label{Sobolev}
S= \inf_{0\neq u\in H^1_0(B)} \ddfrac{\int_B |D u|^2 \dif x}{\left(\int_B |u|^{p+1}\dif x\right)^{\frac{2}{p+1}}},
\end{equation}
i.e. the best constant for the subcritical Sobolev's embedding.

\subsection*{Asymptotic notation.} Throughout the paper we use the following asymptotic notation for $\alpha\rightarrow +\infty$. For real valued functions $f(\alpha),g(\alpha)\geq0$ defined for $\alpha$ large we write:
\begin{itemize}
\item  $f(\alpha)=o(g(\alpha))$ as $\alpha\to+\infty$ if $g(\alpha)\neq0$ for $\alpha$ large and $\lim_{\alpha\rightarrow+\infty} \frac{f(\alpha)}{g(\alpha)}=0$;
\item $f(\alpha)=O(g(\alpha))$ as $\alpha\rightarrow+\infty$ if there exists $C>0$ such that $f(\alpha)\leq C g(\alpha)$ for $\alpha$ large;
\item $f(\alpha)\asymp g(\alpha)$ if it holds that $f(\alpha)=O(g(\alpha))$ and $g(\alpha)=O(f(\alpha));$
\item  $f(\alpha)\sim g(\alpha)$ if $g(\alpha)\neq0$ and $\lim_{\alpha\rightarrow+\infty} \frac{f(\alpha)}{g(\alpha)}=1$.
\end{itemize}

\subsection*{Main results}

Using the convergence result in Proposition \ref{prop:limit} below and the symmetry breaking result for groundstate solutions for the H\'enon equation \cite{Smets} , we obtain the following preliminary observation.
\begin{prop}[An embryonic symmetry breaking result]\label{GNNH} Let $N \geq 1$ and $\alpha^*>0$ be such that no groundstate of the H\'enon equation \eqref{classH} is radially symmetric for all $\alpha> \alpha^*$. Then, given any $\alpha > \alpha^*$, there exists $R_0 = R_0(\alpha) \leq 1$ such that no groundstate of \eqref{eq} is radially symmetric if $0\leq R < R_0$.
\end{prop}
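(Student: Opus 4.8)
The plan is to translate ``no groundstate is radially symmetric'' into a strict inequality between the best constants $S_{R,\alpha}$ and $S_{R,\alpha,\textrm{rad}}$, to record that this strict inequality holds at $R=0$ (the H\'enon equation) by hypothesis, and then to transport it to all sufficiently small $R>0$ by means of the convergence supplied by Proposition \ref{prop:limit}.

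The first ingredient is a dichotomy valid for every $R\in[0,1)$ and $\alpha\geq0$. By construction the groundstates of \eqref{eq} are exactly the functions $S_{R,\alpha}^{1/(p-1)}u^*$ with $u^*$ an optimiser of $S_{R,\alpha}$, so a groundstate is radially symmetric if and only if $S_{R,\alpha}$ admits a radial optimiser; and since one always has $S_{R,\alpha}\leq S_{R,\alpha,\textrm{rad}}$, the latter occurs precisely when $S_{R,\alpha}=S_{R,\alpha,\textrm{rad}}$ (a radial optimiser of $S_{R,\alpha}$ realises the value $S_{R,\alpha}$ among radial competitors, while conversely, if the two constants agree, the radial optimiser of $S_{R,\alpha,\textrm{rad}}$ produced by direct minimisation also optimises $S_{R,\alpha}$). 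Hence \emph{no groundstate of \eqref{eq} is radially symmetric if and only if $S_{R,\alpha}<S_{R,\alpha,\textrm{rad}}$}, equivalently, via \eqref{relation} and its non-radial analogue, $C_{R,\alpha}<C_{R,\alpha,\textrm{rad}}$.

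Now fix $\alpha>\alpha^*$. By hypothesis no groundstate of \eqref{classH} is radially symmetric, so the dichotomy applied at $R=0$ gives $\eta:=S_{0,\alpha,\textrm{rad}}-S_{0,\alpha}>0$. Proposition \ref{prop:limit} yields, for this fixed $\alpha$, the convergence of the radial and non-radial groundstate levels of \eqref{eq} to those of \eqref{classH} as $R\to0^+$, equivalently, by \eqref{relation}, $S_{R,\alpha}\to S_{0,\alpha}$ and $S_{R,\alpha,\textrm{rad}}\to S_{0,\alpha,\textrm{rad}}$. Therefore there exists $R_0=R_0(\alpha)\in(0,1]$ such that $S_{R,\alpha}<S_{0,\alpha}+\eta/2=S_{0,\alpha,\textrm{rad}}-\eta/2<S_{R,\alpha,\textrm{rad}}$ for every $R\in(0,R_0)$; by the dichotomy again, no groundstate of \eqref{eq} is radially symmetric for $0<R<R_0$, while the case $R=0$ is the hypothesis itself. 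This is exactly the assertion.

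Everything above is soft, and the real content lies in the input from Proposition \ref{prop:limit}: one must establish the \emph{two-sided} convergence of both best constants as the zero-shell $\{|x|=R\}$ of $V$ collapses onto the origin. The upper bounds $\limsup_{R\to0^+}S_{R,\alpha}\leq S_{0,\alpha}$ (and likewise for the radial constant) follow at once from $0\leq V_{R,\alpha}\leq1$, the pointwise convergence $V_{R,\alpha}(|x|)\to|x|^\alpha$, and dominated convergence evaluated on a fixed near-optimiser; the matching lower bounds require extracting a weak $H^1_0$-limit of $L^{p+1}$-normalised near-optimisers and passing to the limit in $\int_B V_{R,\alpha}|u_R|^{p+1}\dif x$, using the compactness of the subcritical embedding $H^1_0(B)\hookrightarrow L^{p+1}(B)$, the radial constraint being preserved under weak limits in the radial case. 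I expect this to be the only delicate point; note also that the argument is purely qualitative and gives no control on $R_0(\alpha)$ as $\alpha\to\infty$, which is precisely what the quantitative statements to follow address.
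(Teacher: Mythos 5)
Your proof is correct, and the dichotomy ``no radial groundstate $\iff S_{\alpha,R}<S_{\alpha,R,\textrm{rad}}$'' together with the strict gap at $R=0$ does close the argument, but the route you take through Proposition~\ref{prop:limit} is genuinely different from the paper's. The paper's one-line proof leans on the $C^{2,\delta}(\overline{B})$ convergence of optimisers stated in Proposition~\ref{prop:limit}: if radially symmetric optimisers $u^*_{\alpha,R_n}$ existed along a sequence $R_n\to 0^+$, a $C^{2,\delta}$-subsequential limit $U_0$ would again be radial, contradicting the fact that $U_0$ optimises the H\'enon quotient and is therefore (after scaling) a groundstate of \eqref{classH}, nonradial for $\alpha>\alpha^*$ by \cite{Smets, BW}; this uses nothing beyond what Proposition~\ref{prop:limit} asserts. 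Your version runs instead through the continuity of \emph{both} best constants as $R\to 0^+$, and so needs $S_{\alpha,R,\textrm{rad}}\to S_{\alpha,0,\textrm{rad}}$, which Proposition~\ref{prop:limit} does not state. You correctly flag this and sketch its proof (the same compactness argument works, using that the radial subspace of $H^1_0(B)$ is weakly closed), so there is no real gap. Each route has its merits: the paper's reads off the stated proposition but relies on the strong $C^{2,\delta}$ compactness built into it; yours is softer, needing only weak $H^1_0$ convergence, weak lower semicontinuity of the Dirichlet energy and the compact embedding $H^1_0(B)\hookrightarrow L^{p+1}(B)$, but it requires the extra radial statement. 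As you observe, both arguments are purely qualitative and give no control on $R_0(\alpha)$ as $\alpha\to\infty$.
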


One of our main results is a quantitative estimate for a radius $R_0$ which is sufficient for symmetry breaking to occur for {\it all} the groundstate solutions relative to $R<R_0.$  The price we pay for such a quantitative bound on $R$ is a {\it non-uniform} symmetry breaking result with respect to $\alpha.$ More precisely, we prove the following result.

\begin{thm}[Quantitative symmetry breaking]  \label{symmetry breaking theorem}
Let $N\geq 3$, $1<p<\frac{N+2}{N-2}$ and set 
\begin{equation}\label{Kappa}
K(N,p)= (N-2)^{-\frac{p+1}{2}} |\mathbb S ^{N-1}|^{\frac{1-p}{2}}\Gamma\left(N-\frac{(p+1)(N-2)}{2}\right) .
\end{equation}
For any $R \in [0,1)$ with 
\begin{equation}\label{condition}
R^{\frac{2N}{p+1} - (N-2)}\min\left\{e^{\frac{4}{R(p+1)}}, e^{\frac{4}{(1-R)(p+1)}}\right\}<  K(N,p)^{-\frac{2}{p+1}} S^{-1},
\end{equation}
there exists $\alpha(R)$ such that
$$S_\alpha<S_{\alpha,\textrm{rad}}$$
and groundstates of \eqref{eq} are nonradial for all $\alpha>\alpha(R).$
\end{thm}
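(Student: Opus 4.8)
\emph{Proof proposal.} The plan is to compare $S_\alpha$, bounded \emph{from above} by a suitable nonradial test function, with $S_{\alpha,\textrm{rad}}$, bounded \emph{from below} by a sharp radial Sobolev-type inequality. Both bounds will have the same order $\alpha^{\gamma}$ in $\alpha$, where $\gamma:=\frac{2N}{p+1}-(N-2)>0$ is the exponent occurring in \eqref{condition}, and \eqref{condition} will turn out to be exactly the statement that the coefficient of $\alpha^{\gamma}$ in the upper bound for $S_\alpha$ is strictly below the one in the lower bound for $S_{\alpha,\textrm{rad}}$. Hence $S_\alpha<S_{\alpha,\textrm{rad}}$ for all $\alpha$ large (depending on $R$). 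This in turn forces every groundstate of \eqref{eq} to be nonradial: if $u_\alpha=S_\alpha^{1/(p-1)}u_\alpha^*$ were radial, then $u_\alpha^*\in H^1_{0,\textrm{rad}}(B)$ would be an optimiser of $S_\alpha$, so
$S_\alpha=\frac{\int_B|Du_\alpha^*|^2\dif x}{(\int_B V(|x|)|u_\alpha^*|^{p+1}\dif x)^{2/(p+1)}}\ge S_{\alpha,\textrm{rad}}$, a contradiction (recall $S_\alpha$ is attained by compactness of $H^1_0(B)\hookrightarrow L^{p+1}(B)$).

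For the \emph{lower bound}, I would set $m:=N-\frac{(p+1)(N-2)}{2}=\frac{(p+1)\gamma}{2}>0$, the argument of the $\Gamma$-factor in $K(N,p)$. For radial $u\in H^1_{0,\textrm{rad}}(B)$, from $u(r)=-\int_r^1u'(t)\dif t$ and Cauchy--Schwarz (here $N\ge3$ is used),
\[
|u(r)|^2\le\Big(\int_r^1 t^{1-N}\dif t\Big)\int_0^1|u'(t)|^2t^{N-1}\dif t=\frac{r^{2-N}-1}{N-2}\cdot\frac{1}{|\mathbb S^{N-1}|}\int_B|Du|^2\dif x.
\]
Inserting this into $\int_B V(|x|)|u|^{p+1}\dif x=|\mathbb S^{N-1}|\int_0^1V(r)|u(r)|^{p+1}r^{N-1}\dif r$, I would estimate $\frac{r^{2-N}-1}{N-2}\le\frac{r^{2-N}}{N-2}$ on $[0,R]$ (this keeps the factor $N-2$) and $\frac{r^{2-N}-1}{N-2}\le r^{2-N}(1-r)$ on $[R,1]$ (this exploits the vanishing of $u$ at $r=1$). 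With $r=Rs$ and $\int_0^1(1-s)^\alpha s^{m-1}\dif s=B(m,\alpha+1)=\Gamma(m)\frac{\Gamma(\alpha+1)}{\Gamma(\alpha+m+1)}\sim\Gamma(m)\alpha^{-m}$, the $[0,R]$-part is bounded by $(N-2)^{-\frac{p+1}{2}}R^m\Gamma(m)\alpha^{-m}(1+o(1))$, while an analogous Beta-integral bounds the $[R,1]$-part by $O(\alpha^{-(p+3)/2})$, which is $o(\alpha^{-m})$ since $\frac{p+3}{2}>m\Leftrightarrow p>1$. Taking the $(\,\cdot\,)^{-2/(p+1)}$ power and using $\big((N-2)^{-(p+1)/2}\big)^{-2/(p+1)}=N-2$ gives
\[
S_{\alpha,\textrm{rad}}\ge|\mathbb S^{N-1}|^{\frac{p-1}{p+1}}(N-2)\,\Gamma(m)^{-\frac{2}{p+1}}R^{-\gamma}\alpha^{\gamma}(1+o(1))=K(N,p)^{-\frac{2}{p+1}}R^{-\gamma}\alpha^{\gamma}(1+o(1)),
\]
the last equality being the definition of $K(N,p)$ unwound.

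For the \emph{upper bound}, let $\phi$ be a positive optimiser of the subcritical Sobolev quotient \eqref{Sobolev}, normalised so that $\int_B|\phi|^{p+1}\dif x=1$, hence $\int_B|D\phi|^2\dif x=S$. For any ball $B(x_0,\rho)\subset B$, the function $u(x)=\phi\big(\tfrac{x-x_0}{\rho}\big)\in H^1_0(B)$ satisfies, by scaling and since $N-2-\tfrac{2N}{p+1}=-\gamma$,
\[
\frac{\int_B|Du|^2\dif x}{\big(\int_BV(|x|)|u|^{p+1}\dif x\big)^{2/(p+1)}}\le\frac{\rho^{N-2}S}{\big(\inf_{B(x_0,\rho)}V\big)^{2/(p+1)}\rho^{2N/(p+1)}}=S\,\big(\inf_{B(x_0,\rho)}V\big)^{-\frac{2}{p+1}}\rho^{-\gamma}.
\]
I would then take $\rho=1/\alpha$ and $x_0=\tfrac1\alpha e_1$ ($e_1$ the first coordinate vector), so that for $\alpha$ large $B(x_0,\rho)\subset\{|x|\le R\}$, the origin lies on $\partial B(x_0,\rho)$, and since $V(r)=(1-r/R)^\alpha$ is decreasing on $[0,R]$ one gets $\inf_{B(x_0,\rho)}V=(1-\tfrac{2}{\alpha R})^{\alpha}\to e^{-2/R}$; moreover $u$, being radial about $x_0\ne0$, is nonradial. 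Alternatively take $x_0=(1-\tfrac1\alpha)e_1$, so that $B(x_0,\rho)\subset\{R\le|x|\le1\}$ is internally tangent to $\partial B$ and $\inf_{B(x_0,\rho)}V=(1-\tfrac{2}{\alpha(1-R)})^{\alpha}\to e^{-2/(1-R)}$. Since $\rho^{-\gamma}=\alpha^{\gamma}$, choosing the better of the two placements yields
\[
S_\alpha\le S\,\min\Big\{e^{\frac{4}{R(p+1)}},\,e^{\frac{4}{(1-R)(p+1)}}\Big\}\,\alpha^{\gamma}(1+o(1)).
\]
As \eqref{condition} is precisely the inequality $S\cdot\min\{e^{4/(R(p+1))},e^{4/((1-R)(p+1))}\}<K(N,p)^{-2/(p+1)}R^{-\gamma}$, there is $\alpha(R)$ beyond which the leading $\alpha^{\gamma}$ terms of the two bounds force $S_\alpha<S_{\alpha,\textrm{rad}}$, and by the first paragraph all groundstates are then nonradial.

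The hard part will be the sharp lower bound: one must combine the two pointwise radial estimates so as to retain the \emph{full} constant $K(N,p)$ — in particular the factor $N-2$, which would be lost by using $\frac{r^{2-N}-1}{N-2}\le r^{2-N}(1-r)$ near the origin — while simultaneously showing the boundary contribution is of strictly lower order, and one must make the Beta-function asymptotics (hence the $o(1)$'s) quantitative enough to exhibit $\alpha(R)$. The upper bound is comparatively routine, the only care being to check that the two balls lie in $B$ and in the regions $\{|x|<R\}$, respectively $\{|x|>R\}$, for $\alpha$ large.
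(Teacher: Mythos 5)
Your proposal is correct, and the route you take for the lower bound on $S_{\alpha,\textrm{rad}}$ is genuinely different from the paper's. The paper estimates $\int_{\partial B}|Du_{\alpha,\textrm{rad}}|^2$ in two ways --- from above via Nehari's identity and H\"older (Lemma~\ref{lemma1}), from below via Pohozaev's identity and Ni's inequality (Lemma~\ref{lemma2}) --- and combines them into the inequality $\big(\frac{2(N+\alpha)}{p+1}-(N-2)\big)\frac{2(p+1)}{p-1}C_{\alpha,\textrm{rad}}\le A_\alpha(R)+B_\alpha$ of Lemma~\ref{lemma3}, which it then analyses through a dichotomy on $L(R)=\limsup A_\alpha(R)/B_\alpha$. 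You instead bound the Rayleigh quotient directly, for an \emph{arbitrary} radial $u$, by inserting the pointwise estimate $|u(r)|^2\le\frac{r^{2-N}-1}{(N-2)|\mathbb S^{N-1}|}\|Du\|_{L^2}^2$ into $\int_B V|u|^{p+1}$, splitting at $r=R$, keeping the factor $N-2$ on $[0,R]$ and trading it for the vanishing factor $(1-r)$ on $[R,1]$; the two Beta-integral asymptotics ($\alpha^{-m}$ and $\alpha^{-(p+3)/2}$, with $\tfrac{p+3}{2}>m$ iff $p>1$) then give exactly the leading constant $K(N,p)^{-2/(p+1)}R^{-\gamma}\alpha^{\gamma}$ with no case distinction. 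Your computation of the constant does reproduce the paper's $K(N,p)=(N-2)^{-(p+1)/2}|\mathbb S^{N-1}|^{(1-p)/2}\Gamma(m)$, and the upper bound on $S_\alpha$ via rescaled Sobolev optimisers placed near the origin or near $\partial B$ is the same as the paper's Lemma~\ref{Salpha}. What the paper's route buys is the intermediate estimate \eqref{radial short} on $C_{\alpha,\textrm{rad}}$, which is reused in Theorem~\ref{symmetry breaking theorem2}, Remark~\ref{lowerSbound} and Proposition~\ref{R1}; what yours buys is a shorter, self-contained proof of the theorem that avoids the Pohozaev/Nehari machinery and the $L(R)$ dichotomy. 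One cosmetic caveat: your final display $S_{\alpha,\textrm{rad}}\ge K(N,p)^{-2/(p+1)}R^{-\gamma}\alpha^{\gamma}(1+o(1))$ is stated for $R>0$; for $R=0$ the $[0,R]$-part is absent and the same computation yields the stronger rate $\alpha^{1+2/(p+1)}$, so \eqref{condition} (whose left-hand side vanishes at $R=0$) is still more than enough --- worth a line to make the statement uniform over $R\in[0,1)$.
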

\begin{remark}
Since $\min\left\{e^{\frac{4}{R(p+1)}}, e^{\frac{4}{(1-R)(p+1)}}\right\}\leq e^{\frac{8}{p+1}}$ for $R\in(0,1),$ (\ref{condition}) yields a radius $$R_0:=\Big [e^{-\frac{8}{p+1}}K(N,p)^{-\frac{2}{p+1}} S^{-1}\Big]^{\frac{p+1}{2N-(p+1)(N-2)}}$$ such that for any $R<R_0,$ a value $\alpha(R)$ exists such that
 groundstates of \eqref{eq} are nonradial for all $\alpha>\alpha(R).$
\end{remark}

\begin{remark}
We stress that a symmetry breaking result, uniform with respect to $R \in [0,1)$, based on the growth  of $S_{\alpha}$ and $S_{\alpha, rad}$, cannot be obtained. Indeed, this follows from the convergence \eqref{limitconstants}, combined with
\[
S_{\alpha} = S_{\alpha, rad} \asymp \alpha^{\frac{2N}{p+1}-(N-2)} \ \ \text{as} \ \ \alpha \to \infty, \ \ \text{for $R=1$},
\]
which is a consequence of \eqref{eq:boundconstant} and \eqref{asymptotic3}, and 
\[
S_{\alpha}  \asymp \alpha^{\frac{2N}{p+1}-(N-2)} \ \ \text{and} \ \ S_{\alpha, rad} \asymp \alpha^{1+ \frac{2}{p+1}} \ \ \text{as} \ \ \alpha \to \infty \ \ \text{for $R=0$},
\]
which have been proved in \cite[eq. (5)]{Smets} and \cite[Proposition 3.1]{CaoPeng}.
\end{remark}

Theorem \ref{symmetry breaking theorem} is achieved by the method which we may outline as follows. We first deduce an asymptotic upper bound on $S_\alpha$ in Section \ref{Slower}. Then we obtain asymptotic lower bound estimates on $S_{\alpha,\textrm{rad}}$ by means of intermediate inequalities developed in the main body of the paper, Section \ref{main body}. These are obtained by Nehari's and Pohozaev's identities, expressing integrals of radial functions in terms of Euler's Beta functions (involving $\alpha$) after using Ni's inequality \cite{Ni}; see also \cite{Strauss}. In performing this we pay attention to the contribution of the origin to integrals of the form 
$\displaystyle{ \int_{B_{R}}}\,\, \ldots\,\, \dif x,$
which are the technical obstruction for symmetry breaking to occur for all $R<1.$ This is expected, as we have recalled that symmetry holds when $R=1.$  The aforementioned intermediate estimates take into account possibly different asymptotic contributions, namely $A_\alpha(R)$ and $B_\alpha$ in Lemma \ref{lemma3}, which may be regarded as a weighted influence produced by both the (\ref{eqGNN}) and the (\ref{classH}) limiting PDEs; see also Remark \ref{lowerSbound}.  In fact the proof of Theorem \ref{symmetry breaking theorem} in Section \ref{symmetry breaking proof} consists in analysing the (least favourable) case where the lower bound for $S_{\alpha,\textrm{rad}}$ is asymptotically equivalent to the upper bound for $S_{\alpha}.$ This corresponds to $A_\alpha(R)$ being asymptotically strong: a phenomenon which does not occur when dealing with the H\'enon equation \cite{Smets}. In this case the condition $R<R_0$ is found comparing the constants so that $S_{\alpha,\textrm{rad}}>S_{\alpha}$ holds asymptotically and strictly. We believe that this analysis relates in a nontrivial way to that for the classical H\'enon equation as given in \cite{Smets}. As a byproduct of this method we obtain a lower bound on the subcritical best constant $S$; see also Remark \ref{lowerSbound}\,.

\begin{remark} [A lower bound for $S$]\label{lowerCorollary}
Let $N\geq 3$, $S=S(N,p)$ be the best constant defined in (\ref{Sobolev}) and $K(N,p)$ as in \eqref{Kappa}. Then by \cite{Gidas} no symmetry breaking occurs for $R=1$ and so
\begin{equation}\label{Sobolev estimate}
(N-2) |\mathbb S ^{N-1}|^{\frac{p-1}{p+1}} \left(\Gamma\left(N-\frac{(p+1)(N-2)}{2}\right)\right)^{-\frac{2}{p+1}} e^{-\frac{4}{p+1}}\leq S(N,p).
\end{equation}
In a recent paper \cite{Anello} it has been proved that the function $$p\mapsto S(N,p)|\mathbb S^{N-1}|^{2/(p+1)}$$ is decreasing on $(0,2^*-1),$ for $N\geq 3$ and on $(0,\infty)$ for $N=1,2.$ Using this result one has a lower bound by means of the explicitly known value of the classical Sobolev constant for $p=2^*-1$ \cite{Talenti}. We wonder if our method would yield an improved lower bound with a possibly different choice of $V.$
\end{remark}

The case $N=2$ is more tricky. In this case we use an {\it a priori} logarithmic radial estimate    
from \cite[Lemma 2.5]{BonheureSerraTarallo} which is the low dimensional analog of Ni's classical estimate in \cite{Ni}. Following the scheme of the proof of Theorem \ref{symmetry breaking theorem}  for $N=2$ one finds that, as a consequence of the `$\log$' factor, the  asymptotic bounds for $S_{\alpha}$ and $S_{\alpha,\textrm{rad}}$ do not match, see Remark \ref{remarkN2}. Nevertheless, allowing the radius $R$ to be a suitable function of $\alpha$ yields the following. 

\begin{thm}[`Moving shell' symmetry breaking]  \label{symmetry breaking theorem2}
Let $N= 2$ and $p>1.$ Let $1>R(\cdot)\geq 0$ be a function such that for some $\delta>0$
\begin{equation}\label{N2hyp}
\limsup_{\alpha\rightarrow \infty} \alpha^\delta R(\alpha)<\infty.
\end{equation}
Then, there exists $\alpha_*>0$ such that
$$S_\alpha<S_{\alpha,\textrm{rad}}$$
and groundstates of \eqref{eq} are nonradial for all $\alpha>\alpha_*.$
\end{thm}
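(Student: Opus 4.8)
The plan is to reproduce, in dimension $N=2$, the mechanism behind Theorem~\ref{symmetry breaking theorem}: to play an asymptotic upper bound for $S_\alpha$ against an asymptotic lower bound for $S_{\alpha,\textrm{rad}}$, the only changes being that Ni's inequality is replaced by the logarithmic radial estimate of \cite[Lemma~2.5]{BonheureSerraTarallo} and that the dependence on the shell radius $R=R(\alpha)$ must be kept explicit. First I would invoke the upper bound of Section~\ref{Slower}, which for $N=2$ gives $S_\alpha=O\big(\alpha^{4/(p+1)}\big)$, i.e. $S_\alpha^{(p+1)/2}=O(\alpha^{2})$ as $\alpha\to\infty$; the competitor is a profile concentrating at scale $(1-R)/\alpha$ near a boundary point, where $V$ is bounded away from $0$, and for $\alpha$ large it is supported in $B\setminus B_{R(\alpha)}$, so this estimate holds uniformly along $R=R(\alpha)\to0$ (indeed for every $R$ bounded away from $1$).

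Next I would establish the lower bound for $S_{\alpha,\textrm{rad}}$ following Section~\ref{main body} verbatim but in two dimensions: combine Nehari's and Pohozaev's identities for the radial minimiser of \eqref{S-radial} with the bound $u(r)^{2}\le c_{0}\log(1/r)\int_{B}|Du|^{2}\dif x$ valid for $0\neq u\in H^{1}_{0,\textrm{rad}}(B)$, and express the resulting radial integrals via Euler's Beta function in $\alpha$. This should produce the $N=2$ analogue of Lemma~\ref{lemma3}, a bound of the form
\[
S_{\alpha,\textrm{rad}}^{-\frac{p+1}{2}}\ \le\ c\,\big(A_\alpha(R)+B_\alpha\big),
\]
where $B_\alpha$ gathers the contribution of $B\setminus B_R$ --- the H\'enon-type region near $\partial B$, on which $\log(1/r)$ is \emph{small}, so no logarithm survives --- and satisfies $B_\alpha\asymp\alpha^{-\frac{p+3}{2}}$, while $A_\alpha(R)$ gathers the contribution of the region $B_R$ around the shell of zeros of $V$. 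Since $V(0)=1$ and a radial $H^1_0$ function is only logarithmically controlled near the origin when $N=2$, the logarithm \emph{does} survive in $A_\alpha(R)$: localising the weight $(1-r/R)^\alpha$ on $[0,O(R/\alpha)]$ gives $A_\alpha(R)\asymp R^{2}\alpha^{-2}\big(\log(\alpha/R)\big)^{\frac{p+1}{2}}$. This term is exactly the source of the obstruction recorded in Remark~\ref{remarkN2}: for $R$ fixed, $A_\alpha(R)\asymp\alpha^{-2}(\log\alpha)^{(p+1)/2}$ exceeds $\alpha^{-2}$ by a logarithm, so the lower bound for $S_{\alpha,\textrm{rad}}$ falls a factor $\log\alpha$ short of the upper bound for $S_\alpha$ and the two do not separate.

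The role of hypothesis \eqref{N2hyp} is to suppress $A_\alpha(R)$ by a genuine power of $\alpha$. Fix $\delta>0$ and $C,\alpha_0$ with $R(\alpha)\le C\alpha^{-\delta}$ for $\alpha\ge\alpha_0$; since $t\mapsto t^{2}\big(\log(\alpha/t)\big)^{(p+1)/2}$ is increasing on $(0,1)$ for $\alpha$ large, this forces $A_\alpha(R(\alpha))=O\!\big(\alpha^{-2-2\delta}(\log\alpha)^{(p+1)/2}\big)=o(\alpha^{-2})$, the factor $\alpha^{-2\delta}$ defeating every power of $\log\alpha$ regardless of $p$; and $B_\alpha\asymp\alpha^{-\frac{p+3}{2}}=o(\alpha^{-2})$ because $p>1$. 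Hence $S_{\alpha,\textrm{rad}}^{-(p+1)/2}=o(\alpha^{-2})$, whereas Step~1 gives $S_\alpha^{-(p+1)/2}\ge c'\alpha^{-2}$; therefore $S_{\alpha,\textrm{rad}}^{-(p+1)/2}<S_\alpha^{-(p+1)/2}$, i.e. $S_\alpha<S_{\alpha,\textrm{rad}}$, for all $\alpha>\alpha_*$. Finally, every groundstate $u$ of \eqref{eq} is a solution, so $\int_B|Du|^2\dif x=\int_B V(|x|)|u|^{p+1}\dif x$, and since all groundstates share the mountain-pass energy $C_\alpha$, the quotient in \eqref{S-nonradial} equals $S_\alpha$ at every groundstate; a radial groundstate would then be admissible in \eqref{S-radial} and force $S_{\alpha,\textrm{rad}}\le S_\alpha$, a contradiction. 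So all groundstates of \eqref{eq} are nonradial for $\alpha>\alpha_*$ (and foliated Schwarz symmetric by \cite[Theorem~2.6]{BWW}).

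I expect the second step to be the main obstacle: carrying out the Nehari--Pohozaev bookkeeping in dimension two and, above all, obtaining $\alpha$-asymptotics for the radial integrals $\int_0^1 V(r)\big(\log(1/r)\big)^{\beta}r\dif r$ --- which are no longer pure Beta integrals --- sharp enough to pin down the exact powers of $R$ and of $\log(\alpha/R)$ in $A_\alpha(R)$ and to confirm that $B_\alpha$ is genuinely logarithm-free. Everything downstream, in particular the way \eqref{N2hyp} enters, is then elementary.
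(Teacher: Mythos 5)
Your strategy is sound and leads to the theorem, but it is a genuinely different route from the paper's, and there is a mismatch between the derivation you announce and the inequality you actually use; it is worth being precise about both.

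The paper's proof does not keep the logarithm at all. In Lemma~\ref{lemma2'} the bound $|u(r)|\lesssim|\ln r|^{1/2}\|Du\|_{L^2}$ is converted into the power estimate $|u(r)|\le c_\varepsilon(2\pi)^{-1/2}\|Du\|_{L^2}\,r^{-\varepsilon}$ via $c_\varepsilon=\sup_{r\in(0,1)}r^\varepsilon|\ln r|^{1/2}$, so the radial integrals remain pure Beta integrals $R^{\beta+1}\Gamma(\alpha)\Gamma(\beta+1)/\Gamma(\alpha+\beta+1)$ with $\beta=1-(p+1)\varepsilon$. Lemma~\ref{lemma3} then produces the mixed-power inequality
\[
1+\tfrac{2}{\alpha}\ \lesssim\ R^{\beta+1}\tfrac{\Gamma(\alpha)}{\Gamma(\alpha+\beta+1)}\,S_{\alpha,\textrm{rad}}^{\frac{p+1}{2}}\;+\;\alpha^{-1-\frac{2}{p+1}}\,S_{\alpha,\textrm{rad}},
\]
where the two terms carry \emph{different} powers of $S_{\alpha,\textrm{rad}}$ --- which is exactly why the paper splits into the two cases $L=\limsup A_\alpha/B_\alpha$ finite or infinite, and in the infinite case selects the specific exponent $\varepsilon=4\delta/((p+1)(2\delta+3))$ to beat the upper bound $S_\alpha=O(\alpha^{4/(p+1)})$.

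Your version avoids all of this: you keep the logarithm and estimate the log-weighted integrals $\int_0^1 V(r)(\log(1/r))^{(p+1)/2}r\,dr$ directly, splitting at $r=R$. The asymptotics you state are correct --- $A_\alpha(R)\asymp R^2\alpha^{-2}\big(\log(\alpha/R)\big)^{(p+1)/2}$ (with mass at $1-r/R\sim1-1/\alpha$) and $B_\alpha\asymp(1-R)^{(p+3)/2}\alpha^{-(p+3)/2}$ --- and hypothesis \eqref{N2hyp} together with the monotonicity of $t\mapsto t^2(\log(\alpha/t))^{(p+1)/2}$ then forces $A_\alpha+B_\alpha=o(\alpha^{-2})$, hence $S_{\alpha,\textrm{rad}}/\alpha^{4/(p+1)}\to\infty$, which dominates $S_\alpha$. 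This is cleaner and more transparent about the role of \eqref{N2hyp}, and it does not require the two-case analysis or the parameter $\varepsilon$. It also makes visible, exactly as you observe, the $\log\alpha$ defect of Remark~\ref{remarkN2} when $R$ is kept fixed.

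The one thing to fix is the provenance of the inequality $S_{\alpha,\textrm{rad}}^{-(p+1)/2}\le c(A_\alpha(R)+B_\alpha)$. You attribute it to ``Nehari's and Pohozaev's identities following Section~\ref{main body} verbatim,'' but that route does \emph{not} produce a bound in which both terms multiply the same power $S_{\alpha,\textrm{rad}}^{(p+1)/2}$: the Pohozaev/H\"older term multiplies $S_{\alpha,\textrm{rad}}^{1}$ (coefficient $\alpha^{-(p+3)/(p+1)}$, not $\alpha^{-(p+3)/2}$), which is \emph{not} $o(\alpha^{-2})$ for $p>1$, so the ``add the two remainders'' step would fail. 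What actually yields your inequality, with your asymptotics, is the Nehari normalisation alone, with no Pohozaev: take the optimiser $u^*$ for $S_{\alpha,\textrm{rad}}$ with $\int_B V|u^*|^{p+1}\,\mathrm{d}x=1$, so that $\|Du^*\|_{L^2}^2=S_{\alpha,\textrm{rad}}$, and insert the logarithmic radial bound into the constraint:
\[
1=\int_B V\,|u^*|^{p+1}\,\mathrm{d}x\ \le\ c_0^{p+1}\,S_{\alpha,\textrm{rad}}^{\frac{p+1}{2}}\cdot2\pi\int_0^1 V(r)\bigl(\log(1/r)\bigr)^{\frac{p+1}{2}}r\,\mathrm{d}r .
\]
This is precisely the form you use. (The Pohozaev step buys an extra factor of $\alpha$ that is essential in higher dimensions for $p$ near $1$, but in $N=2$ the direct constraint already gives $S_{\alpha,\textrm{rad}}\gtrsim\alpha^{(p+3)/(p+1)}$ for $R=0$, matching Case~1 of the paper's proof.) With that correction the rest of your argument, including the deduction of nonradiality of groundstates from $S_\alpha<S_{\alpha,\textrm{rad}}$, is correct.
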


The bound provided by Lemma \ref{Salpha} below is also used to achieve the following qualitative result, which essentially states that under a certain growth assumption on the picks of the groundstates, as $\alpha$ gets large, maximum points are allowed to accumulate only at the \linebreak boundary or at the origin, as heuristically pointed out earlier in this introduction. The proof also combines in an essential way a blow-up argument with Liouville theorems, such as Gidas-Spruck's \cite{Gidas0}. 

\begin{thm} [Boundary or origin concentration] \label{nec} Let $N\geq 3$ and $R\in(0,1)$ be fixed. Given $\alpha>0$, let $u_{\alpha}$ be a groundstate solution of \eqref{eq}. Set $\beta_{\alpha}:=\max_{x\in \overline{B}} u_{\alpha}=u_{\alpha}(x_{\alpha})$, where $x_{\alpha}= (s_{\alpha},0, \ldots, 0)$ with $s_{\alpha}\in [0,1)$. Assume that 
\begin{equation}\label{necessarygrowth}
\liminf_{\alpha\rightarrow \infty}\,\alpha^{\frac{2}{p-1}}\beta_{\alpha}^{-1}<\infty.
\end{equation} Then $(x_{\alpha})$ has at most two accumulation points, namely either the origin or $(1,0,\ldots,0)$.
\end{thm}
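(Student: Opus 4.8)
The plan is a blow-up analysis at the maxima $x_\alpha$, combining an energy identity for groundstates, the upper bound on $S_\alpha$ from Lemma~\ref{Salpha}, and Liouville-type theorems. I would first record two preliminary facts about any groundstate $u_\alpha$: testing \eqref{eq} with $u_\alpha$ and using the relation between energy level and $S_\alpha$ gives $\int_B|Du_\alpha|^2=\int_BV_\alpha|u_\alpha|^{p+1}=S_\alpha^{(p+1)/(p-1)}$; and since $V_\alpha\to0$ in $L^q(B)$ for every finite $q$, a H\"{o}lder--Sobolev estimate yields $S_\alpha\to\infty$, whence $\beta_\alpha\to\infty$ (from $S_\alpha^{(p+1)/(p-1)}\le|B|\,\beta_\alpha^{p+1}$) and the natural scale $\mu_\alpha:=\beta_\alpha^{-(p-1)/2}$ tends to $0$.

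The core implication I would establish is: \emph{if $\alpha_n\to\infty$, $x_{\alpha_n}\to\bar x$ and $s:=|\bar x|\in(0,1)$, then $\alpha_n^{2/(p-1)}\beta_{\alpha_n}^{-1}\to\infty$}; together with \eqref{necessarygrowth} this forces the accumulation points of $(x_\alpha)$ into $\{|x|=0\}\cup\{|x|=1\}$, i.e.\ (since $x_\alpha$ lies on the positive $x_1$-axis by the foliated Schwarz symmetry) at the origin or at $(1,0,\dots,0)$. To prove the implication, set $v_n(y):=\beta_{\alpha_n}^{-1}u_{\alpha_n}(x_{\alpha_n}+\mu_{\alpha_n}y)$ on $\Omega_n:=\mu_{\alpha_n}^{-1}(B-x_{\alpha_n})$: then $0<v_n\le1$, $v_n(0)=1$, $\Omega_n\to\R^N$ (because $\mathrm{dist}(x_{\alpha_n},\partial B)/\mu_{\alpha_n}\ge(1-s+o(1))/\mu_{\alpha_n}\to\infty$), and $-\Delta v_n=W_n v_n^p$ with $W_n(y):=V_{\alpha_n}(|x_{\alpha_n}+\mu_{\alpha_n}y|)\in[0,1]$. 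The key point is that, since $s\in(0,1)$, on each ball $\{|y|\le\rho\}$ the argument $|x_{\alpha_n}+\mu_{\alpha_n}y|$ stays in a shrinking interval about $s$ on which $V_{\alpha_n}$ equals $(\text{base})^{\alpha_n}$ with the base uniformly bounded away from $1$; hence $W_n\to0$ locally uniformly. Elliptic estimates then give $v_n\to v$ in $C^1_{\mathrm{loc}}(\R^N)$ with $v$ harmonic, $0\le v\le1$, $v(0)=1$, so $v\equiv1$ by Liouville's theorem for bounded harmonic functions. (Rescaling instead so that $W_n$ stays comparable to a positive constant — the natural choice when the maxima approach the origin or the boundary — leads to $-\Delta v=v^p$ on $\R^N$, for which the subcritical Liouville theorem of Gidas--Spruck~\cite{Gidas0} gives $v\equiv0$, incompatible with $v(0)=1$; that scenario does not arise when $s\in(0,1)$.)

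The contradiction — hence the implication — comes from two competing estimates on $\int_{\Omega_n}|Dv_n|^2$. Rescaling gives $\int_{\Omega_n}|Dv_n|^2=\beta_{\alpha_n}^{-\theta}S_{\alpha_n}^{(p+1)/(p-1)}$ with $\theta:=2-\tfrac{(p-1)(N-2)}{2}>0$ by subcriticality; by Lemma~\ref{Salpha}, $S_{\alpha_n}=O(\alpha_n^{\sigma})$ with $\sigma=\tfrac{2N}{p+1}-(N-2)$, and the arithmetic identity $\sigma\cdot\tfrac{p+1}{p-1}=\theta\cdot\tfrac{2}{p-1}=\tfrac{N+2-(N-2)p}{p-1}$ turns this into $\int_{\Omega_n}|Dv_n|^2=O\bigl((\alpha_n^{2/(p-1)}\beta_{\alpha_n}^{-1})^{\theta}\bigr)$. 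On the other hand, since $v_n\to1$ locally uniformly, comparing $v_n$ (which is $\approx1$ on $\{|y|\le\rho\}$ and vanishes on $\partial\Omega_n$) with the capacitary potential of $\overline{B_\rho}$ in the expanding domain $\Omega_n$, and using $\mathrm{cap}(\overline{B_\rho};\Omega_n)\to(N-2)|\mathbb S^{N-1}|\rho^{N-2}$, gives $\liminf_n\int_{\Omega_n}|Dv_n|^2\ge(N-2)|\mathbb S^{N-1}|\rho^{N-2}$ for every $\rho>0$, hence $\int_{\Omega_n}|Dv_n|^2\to\infty$. Comparing the two estimates forces $(\alpha_n^{2/(p-1)}\beta_{\alpha_n}^{-1})^{\theta}\to\infty$, i.e.\ $\alpha_n^{2/(p-1)}\beta_{\alpha_n}^{-1}\to\infty$, which proves the implication.

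The hard part will be the behaviour of the $\alpha$-dependent, vanishing weight in the blow-up, in particular the borderline case $s=R$: there $V_{\alpha_n}$ vanishes identically at $x_{\alpha_n}$, and one still has to prove $W_n\to0$ uniformly on compacta while the maxima drift toward the zero shell $\{|x|=R\}$, which requires a quantitative estimate of $V_{\alpha_n}$ on the intervals $[\,|x_{\alpha_n}|-\mu_{\alpha_n}\rho,\,|x_{\alpha_n}|+\mu_{\alpha_n}\rho\,]$. A secondary, purely bookkeeping, issue is the exact matching of the two powers of $\alpha_n$, which is what makes the rate $\alpha^{2/(p-1)}$ in \eqref{necessarygrowth} precisely the threshold needed to run the argument.
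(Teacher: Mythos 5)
Your proposal is correct and rests on the same basic machinery as the paper: blow up at the scale $\mu_\alpha=\beta_\alpha^{-(p-1)/2}$, pass to a limiting elliptic problem, invoke Liouville-type classification, and contradict via the energy bound coming from Lemma~\ref{Salpha}. The difference is in how the case analysis is organised. The paper sets up all possible limit regimes at once (via the quantity $d=\liminf\mu_m^{-1}\mathrm{dist}(x_m,\partial B)$ and the limiting weight $\gamma$, which may be positive, zero, constant or not, on a half-space or on $\R^N$), then uses Gidas--Spruck for the $\gamma>0$ subcase, the mean-value property for the half-space harmonic subcase, and the $D^{1,2}$/Sobolev argument for the remaining $d=\infty$, $\gamma\equiv0$ subcase. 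You instead prove the single implication that matters: if a subsequence of maxima converges to $\bar x$ with $|\bar x|\in(0,1)$, then $d=\infty$ and the rescaled weight $W_n\to0$ locally uniformly automatically (including the borderline $|\bar x|=R$ you flag, which is harmless because $V_\alpha(r)\le(\delta/\min(R,1-R))^\alpha$ for $|r-R|\le\delta$), so that only the bounded-harmonic Liouville theorem is needed, not Gidas--Spruck nor the half-space case. Your final contradiction --- $v_n\to1$ locally uniformly forces $\int|Dv_n|^2\to\infty$ by comparison with the Newtonian capacity of $\overline{B_\rho}$, against the upper bound $\int|Dv_n|^2=O\bigl((\alpha_n^{2/(p-1)}\beta_{\alpha_n}^{-1})^\theta\bigr)$ --- is a slightly more hands-on version of the paper's step (weak $D^{1,2}$ limit is harmonic, hence in $L^{2^*}$, hence zero, against the local limit $v\equiv1$), and the exponent bookkeeping $\sigma\cdot\tfrac{p+1}{p-1}=\theta\cdot\tfrac{2}{p-1}$ is correct. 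In short: same strategy, leaner case analysis tailored to the statement, and an equivalent but more explicit capacity argument in place of the $D^{1,2}$/Sobolev contradiction.
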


Since for the H\'enon equation, namely for $R=0,$ it holds that $\beta_{\alpha} \asymp \alpha^{2/(p-1)}$ (see e.g. \cite{BW} and \cite{CaoPeng}) and therefore 
$$
\int_{\R^N} |D v_{\alpha}|^2\dif x=O(1),
$$
with $v_{\alpha}(x):=\frac{1}{\beta_{\alpha}} u_{\alpha} (x_{\alpha}+\beta_{\alpha}^{\frac{1-p}{2}}x),$ it is reasonable to believe that (\ref{necessarygrowth}) holds for groundstates when $R\in(0,1)$.  In fact, as we show in the following proposition, (\ref{necessarygrowth}) holds for groundstates also in the case of the endpoint $R=1.$ The proof is based on the estimates developed in the main body of the paper, Section \ref{main body}, together with the symmetry result of Gidas-Ni-Nirenberg \cite{Gidas}; see also Remark \ref{lowerSbound}.
 
 \begin{prop}\label{R1}
Let $N\geq 3$. Given $\alpha>0$, let $u_{\alpha}$ be a groundstate solution of \eqref{eqGNN}. Set $\beta_{\alpha}:=\max_{x\in \overline{B}} u_{\alpha}$. Then, as $\alpha \to \infty,$ it holds that $\beta_{\alpha} \asymp\alpha^{2/(p-1)}$.
	 \end{prop}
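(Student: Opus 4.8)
\emph{Overall plan.} The plan is to pin $\beta_\alpha$ between $c\,\alpha^{2/(p-1)}$ and $C\,\alpha^{2/(p-1)}$: the lower inequality by a soft energy comparison, the upper one by a rescaling argument anchored at the origin. Two inputs are used throughout. First, by Gidas--Ni--Nirenberg \cite{Gidas} (see also \cite[Theorem 2.6]{BWW}) the groundstate $u_\alpha$ of \eqref{eqGNN} is radial and radially decreasing, so $\beta_\alpha=u_\alpha(0)=\|u_\alpha\|_{L^\infty(B)}$, and, again since for $R=1$ groundstates are radial, $S_\alpha=S_{\alpha,\textrm{rad}}$. Second, feeding the asymptotics $S_{\alpha,\textrm{rad}}\asymp\alpha^{\frac{2N}{p+1}-(N-2)}$ into \eqref{relation}, and combining with Nehari's identity $\int_B|Du_\alpha|^2\dif x=\frac{2(p+1)}{p-1}I_\alpha(u_\alpha)$, gives
\[
\int_B|Du_\alpha|^2\dif x\asymp\alpha^{\gamma},\qquad
\gamma:=\frac{4}{p-1}-(N-2)=\frac{2N-(N-2)(p+1)}{p-1}>0,
\]
the positivity of $\gamma$ being equivalent to the subcriticality $p<\frac{N+2}{N-2}$. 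This is the only ingredient from Section \ref{main body} that I will need.

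\emph{Lower bound.} I would bound the right-hand side of Nehari's identity in the crudest way:
\[
\int_B|Du_\alpha|^2\dif x=\int_B(1-|x|)^\alpha u_\alpha^{p+1}\dif x\le\beta_\alpha^{\,p+1}\,|\mathbb S^{N-1}|\int_0^1(1-r)^\alpha r^{N-1}\dif r=\beta_\alpha^{\,p+1}\,|\mathbb S^{N-1}|\,\frac{\Gamma(N)\,\Gamma(\alpha+1)}{\Gamma(N+\alpha+1)}.
\]
Since $\Gamma(\alpha+1)/\Gamma(N+\alpha+1)\asymp\alpha^{-N}$, comparing with $\int_B|Du_\alpha|^2\asymp\alpha^{\gamma}$ yields $\beta_\alpha^{\,p+1}\gtrsim\alpha^{\gamma+N}$, and the arithmetic identity $\gamma+N=\frac{2(p+1)}{p-1}$ turns this into $\beta_\alpha\gtrsim\alpha^{2/(p-1)}$; in particular $\beta_\alpha\to\infty$.

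\emph{Upper bound.} This is where the real work lies. I would rescale around the (origin) maximum point: set $t_\alpha:=\beta_\alpha^{-(p-1)/2}\to0$ and $v_\alpha(x):=\beta_\alpha^{-1}u_\alpha(t_\alpha x)$ for $x\in\Omega_\alpha:=B(0,t_\alpha^{-1})$, so that $0\le v_\alpha\le v_\alpha(0)=1$, $v_\alpha\in H^1_0(\Omega_\alpha)$, the domains $\Omega_\alpha$ exhaust $\R^N$, and $-\Delta v_\alpha=(1-t_\alpha|x|)^\alpha v_\alpha^{p}$ in $\Omega_\alpha$. The right-hand side is $\le1$ in $L^\infty$, so interior elliptic regularity provides, for $\alpha$ large, a bound $\|Dv_\alpha\|_{L^\infty(B(0,1))}\le L$ with $L$ depending only on $N$; since $v_\alpha(0)=1$, this forces $v_\alpha\ge\tfrac12$ on a fixed ball $B(0,\rho_0)$, $\rho_0:=\min\{1,(2L)^{-1}\}$. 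Extending $v_\alpha$ by zero and invoking the Gagliardo--Nirenberg--Sobolev inequality on $\R^N$ (with $2^*=\frac{2N}{N-2}$) then gives a \emph{uniform} lower bound
\[
\int_{\R^N}|Dv_\alpha|^2\dif x\ \ge\ c_N^{-2}\Big(\int_{B(0,\rho_0)}v_\alpha^{2^*}\dif x\Big)^{2/2^*}\ \ge\ c>0,
\]
with $c$ independent of $\alpha$. On the other hand, the change of variables $x\mapsto t_\alpha x$ gives the scaling identity
\[
\int_{\R^N}|Dv_\alpha|^2\dif x=\beta_\alpha^{\frac{(p-1)(N-2)}{2}-2}\int_B|Du_\alpha|^2\dif x\asymp\beta_\alpha^{-\frac{\gamma(p-1)}{2}}\,\alpha^{\gamma},
\]
where I used $\frac{(p-1)(N-2)}{2}-2=-\frac{\gamma(p-1)}{2}<0$. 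Combining the last two displays yields $\beta_\alpha^{\gamma(p-1)/2}\lesssim\alpha^{\gamma}$, i.e. $\beta_\alpha\lesssim\alpha^{2/(p-1)}$, and together with the lower bound this gives $\beta_\alpha\asymp\alpha^{2/(p-1)}$.

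\emph{Main obstacle.} The lower bound and the exponent arithmetic are routine once $\int_B|Du_\alpha|^2\asymp\alpha^{\gamma}$ is in hand. The substantive point is the upper bound: a priori the groundstate could concentrate at the origin faster than the scale $\alpha^{2/(p-1)}$, and excluding this requires knowing that, after the natural $\beta_\alpha$-rescaling, the profile $v_\alpha$ does not collapse near the origin — which is exactly what the interior gradient estimate guarantees — whereupon the domain-independent Sobolev inequality converts non-collapsing into a definite amount of Dirichlet energy. If one preferred, one could instead extract a nontrivial limit $v_\alpha\to v$ solving $-\Delta v=e^{-\ell|x|}v^{p}$ on $\R^N$ for some $\ell\in[0,\infty)$ and close the argument with a Liouville-type statement in the spirit of Gidas--Spruck \cite{Gidas0}, but the energy-based route above keeps the proof self-contained.
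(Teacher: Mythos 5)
Your proof is correct, and it takes a genuinely different route from the paper's in both halves, though both rest on the same key input: the two-sided asymptotics $S_\alpha=S_{\alpha,\textrm{rad}}\asymp\alpha^{\frac{2N}{p+1}-(N-2)}$ (from Lemma \ref{Salpha} and \eqref{useful R1}), fed through Nehari's identity to give $\int_B|Du_\alpha|^2\dif x=\left(S_\alpha\right)^{\frac{p+1}{p-1}}\asymp\alpha^{\frac{2N-(N-2)(p+1)}{p-1}}$. For the lower bound $\beta_\alpha\gtrsim\alpha^{2/(p-1)}$, the paper rescales $u_\alpha$ at the weight's natural scale $1/\alpha$ to get a $v_\alpha$ on $B_\alpha$ with $O(1)$ Dirichlet energy, invokes the separate weighted $L^2$ estimate of Lemma \ref{p1} to bound $\int_{B_\alpha}(1-|x|/\alpha)^\alpha|v_\alpha|^2\dif x$, and then extracts $\max|v_\alpha|^{p-1}$ from Nehari's identity; you instead simply pull $\beta_\alpha^{p+1}$ out of the Nehari identity and evaluate $\int_B(1-|x|)^\alpha\dif x$ via the Beta function, dispensing with Lemma \ref{p1} altogether. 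For the upper bound $\beta_\alpha\lesssim\alpha^{2/(p-1)}$, the paper applies Moser iteration to $-\Delta v_\alpha\le v_\alpha^p$ with bounded energy on $B_\alpha$ to obtain the uniform $L^\infty$ bound $\|v_\alpha\|_\infty\le C$; you instead rescale at the nonlinear scale $t_\alpha=\beta_\alpha^{-(p-1)/2}$ about the origin (where the maximum sits, by Gidas--Ni--Nirenberg), use an interior $C^1$ estimate to show the profile stays above $1/2$ on a fixed small ball, and convert that non-collapse into a definite amount of Dirichlet energy via the domain-independent Sobolev inequality, closing with the scaling identity and the upper growth of $\int_B|Du_\alpha|^2\dif x$. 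The paper's Moser route is somewhat more compact; your non-collapse route makes the non-degeneracy of the concentration profile explicit and would transfer more readily to settings where the location of the maximum is not known in advance.
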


\begin{remark}
Based on the proof of Lemma \ref{Salpha}  it is natural to expect that when $R<1/2$ concentration holds at the boundary, whereas when $R>1/2$ concentration may occur at the origin. In the case $R=1/2$ a groundstate may find equally convenient to concentrate around either the origin, the boundary, or both: in principle one may have different maximum points $(x_{\alpha}),(x'_{\alpha})$ which converge respectively to the origin and to $(1,0,\ldots, 0)$.  
\end{remark}

\section{Estimate for $S_\alpha$}\label{Slower}
Here we write $S_{\alpha}$ as $S_{\alpha,R}$ to emphasize its dependence on $R$.
\begin{lem}\label{Salpha}
Let $N\geq 1$. Given any $R \in [0,1]$,
\begin{equation}\label{eq:boundconstant}
\limsup_{\alpha\rightarrow \infty} S_{\alpha, R} \, \alpha^{N-2-\frac{2N}{p+1}} \leq \min\left\{e^{\frac{4}{R(p+1)}}, e^{\frac{4}{(1-R)(p+1)}}\right\}S \leq e^{\frac{8}{p+1}}S.
\end{equation}
\end{lem}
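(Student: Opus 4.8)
The plan is to construct, for each large $\alpha$, an explicit test function $u_\alpha \in H^1_0(B)$ supported near the point of $\overline B$ where $V$ is largest, namely near the origin (if one chooses the $R$-branch) or near the boundary point $(1,0,\dots,0)$ (if one chooses the $(1-R)$-branch), and to plug it into the Rayleigh quotient defining $S_\alpha$. Since the two cases are symmetric it suffices to carry out one of them, say the construction near the origin giving the factor $e^{4/(R(p+1))}$; the other gives $e^{4/((1-R)(p+1))}$, and the bound $e^{8/(p+1)}$ is then immediate since $\min\{1/R,1/(1-R)\}\le 2$ for $R\in(0,1)$, with the endpoint cases $R=0,1$ handled directly from the definition of $V$.

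First I would take $\varphi$ to be a (rescaled) optimiser, or near-optimiser, for the Sobolev constant $S$ in (\ref{Sobolev}), realised on a fixed small ball $B_\rho$; by density and the subcriticality of $p$ we may take $\varphi \in C_c^\infty(B_\rho)$ with $\int |D\varphi|^2 / (\int |\varphi|^{p+1})^{2/(p+1)}$ as close to $S$ as we like. Then I would rescale: set $u_\alpha(x) := \varphi(\lambda_\alpha x)$ with a concentration parameter $\lambda_\alpha \to \infty$ chosen below, so that $u_\alpha$ is supported in $B_{\rho/\lambda_\alpha}$, well inside $\{|x|<R\}$ for large $\alpha$. A change of variables gives $\int_B |Du_\alpha|^2 = \lambda_\alpha^{2-N}\int|D\varphi|^2$, while
\[
\int_B V(|x|)\,|u_\alpha|^{p+1}\dif x = \lambda_\alpha^{-N}\int_{B_\rho} \Bigl(1-\tfrac{|y|}{R\lambda_\alpha}\Bigr)^{\alpha} |\varphi(y)|^{p+1}\dif y .
\]
The key asymptotic input is that, on the support of $\varphi$, $(1-\frac{|y|}{R\lambda_\alpha})^{\alpha} = \exp\bigl(\alpha\log(1-\frac{|y|}{R\lambda_\alpha})\bigr)$, which tends to $1$ pointwise provided $\alpha/\lambda_\alpha \to 0$, and more precisely behaves like $\exp(-\frac{\alpha}{R\lambda_\alpha}(|y|+o(1)))$. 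The optimal trade-off is obtained by choosing $\lambda_\alpha$ comparable to $\alpha$: writing $\lambda_\alpha = \alpha/(2\rho')$ for a suitable $\rho' $ slightly larger than $\rho$, or — cleaner — choosing $\lambda_\alpha$ so that $\alpha/\lambda_\alpha$ tends to a constant $c$, one gets $\int_B V |u_\alpha|^{p+1} \gtrsim \lambda_\alpha^{-N} e^{-c\rho/R}\int|\varphi|^{p+1}$ after shrinking $\rho$. Assembling,
\[
S_\alpha \le \frac{\lambda_\alpha^{2-N}\int|D\varphi|^2}{\bigl(\lambda_\alpha^{-N} e^{-c\rho/R}\int|\varphi|^{p+1}\bigr)^{2/(p+1)}} = \lambda_\alpha^{\,2-N+\frac{2N}{p+1}}\, e^{\frac{2c\rho}{R(p+1)}}\,\frac{\int|D\varphi|^2}{(\int|\varphi|^{p+1})^{2/(p+1)}} .
\]
Multiplying by $\alpha^{N-2-2N/(p+1)}$ and using $\lambda_\alpha \sim \alpha/c$ (so that $\alpha^{N-2-2N/(p+1)}\lambda_\alpha^{2-N+2N/(p+1)} \to c^{\,N-2-2N/(p+1)}\cdot(\text{const})$, which one arranges to equal $1$ by the precise choice of the relation between $\lambda_\alpha$ and $\alpha$), then sending $\rho\to 0$ and $\varphi$ to an optimiser, yields $\limsup_\alpha S_{\alpha,R}\,\alpha^{N-2-2N/(p+1)} \le e^{\frac{4}{R(p+1)}} S$. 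The bookkeeping of the two limiting parameters ($c$ and $\rho$, or equivalently the exact scale of $\lambda_\alpha$) is what produces the somewhat unexpected constant $4/(R(p+1))$ rather than, say, $2/(R(p+1))$: one must be careful that concentrating too fast ($\lambda_\alpha \gg \alpha$) kills the power of $\alpha$ while concentrating too slowly ($\lambda_\alpha \ll \alpha$) kills the weight factor, and the balance point is $\lambda_\alpha \asymp \alpha$.

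The main obstacle is precisely this optimisation of the concentration rate together with the control of the weight $V$ on the support of the rescaled test function: one needs a uniform (in $\alpha$) lower bound for $\int_{B_\rho}(1-\frac{|y|}{R\lambda_\alpha})^\alpha|\varphi(y)|^{p+1}\dif y$ that degrades exactly like $e^{-\Theta/R}$ with $\Theta$ tied to the power of $\alpha$ one wants to extract, and this forces the quantitative shape of the exponential factor. Once the scaling is pinned down, the remaining steps — the change of variables, passing $\rho \to 0$, and optimising $\varphi$ over near-minimisers of $S$ — are routine, as is the reduction of the endpoint cases $R\in\{0,1\}$ and the derivation of the crude bound $e^{8/(p+1)}S$ from $\min\{1/R,1/(1-R)\}\le 2$.
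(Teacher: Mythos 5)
Your plan is the same basic strategy as the paper's proof (a rescaled, concentrated test function near the favourable side of the shell of zeroes of $V$, plugged into the Rayleigh quotient), but it has a real gap in the bookkeeping that would cause the argument to fail as written.

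The flaw is in the sentence asserting that ``by density and the subcriticality of $p$ we may take $\varphi \in C_c^\infty(B_\rho)$ with $\int|D\varphi|^2/(\int|\varphi|^{p+1})^{2/(p+1)}$ as close to $S$ as we like,'' followed later by ``sending $\rho\to 0$.'' In the \emph{subcritical} regime the Sobolev quotient is not scale-invariant: if $S(B_\rho)$ denotes the best constant for $H^1_0(B_\rho)\hookrightarrow L^{p+1}(B_\rho)$, then by the change of variables $y\mapsto y/\rho$ one has $S(B_\rho)=\rho^{\,N-2-\frac{2N}{p+1}}S$, and since $1<p<\frac{N+2}{N-2}$ the exponent $N-2-\frac{2N}{p+1}$ is strictly negative, so $S(B_\rho)>S$ for $\rho<1$ and $S(B_\rho)\to+\infty$ as $\rho\to 0$. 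Thus no $\varphi$ supported in a small $B_\rho$ can have Rayleigh quotient close to $S$, and the final step ``sending $\rho\to 0$ and $\varphi$ to an optimiser'' is not available. If you track this factor of $\rho^{N-2-2N/(p+1)}$ in your own computation, the product $c^{N-2-2N/(p+1)}\rho^{N-2-2N/(p+1)}e^{2c\rho/(R(p+1))}$ depends only on $t=c\rho$ and does \emph{not} tend to $e^{4/(R(p+1))}$ as you vary $c$ and $\rho$; the claim that the ``balance point $\lambda_\alpha\asymp\alpha$'' produces precisely the constant $4/(R(p+1))$ is not substantiated by what you wrote.

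The paper avoids all of this by introducing only \emph{one} scaling parameter: take an arbitrary $\omega\in\mathcal D(B)$ (support in the full unit ball, never shrunk), set $x_\alpha=(1-\frac1\alpha,0,\dots,0)$ (resp. $(\frac1\alpha,0,\dots,0)$) and $w_\alpha=\omega(\alpha(\cdot-x_\alpha))$. The support of $w_\alpha$ is then a ball of radius $1/\alpha$ about $x_\alpha$, hence contained in the annulus $1-\frac2\alpha\le |x|<1$ (resp. in $B_{2/\alpha}$), and one bounds $V$ from below there by $V(1-\frac2\alpha)=\bigl(1-\frac{2}{(1-R)\alpha}\bigr)^\alpha\to e^{-2/(1-R)}$ (resp. by $\bigl(1-\frac{2}{R\alpha}\bigr)^\alpha\to e^{-2/R}$). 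Raising to the power $-\frac{2}{p+1}$ in the denominator produces exactly $e^{4/((1-R)(p+1))}$ (resp. $e^{4/(R(p+1))}$); the extra factor of $\alpha^{2-N+\frac{2N}{p+1}}$ comes out of the change of variables, and then one takes the infimum over $\omega\in\mathcal D(B)$ (legitimately approaching $S$) \emph{after} $\alpha\to\infty$. No second parameter $\rho$ and no optimisation over concentration rates is needed. Incidentally, centring the interior bump at the origin rather than at $(1/\alpha,0,\dots,0)$, as you propose, would make the support radius $1/\alpha$ rather than $2/\alpha$ and would yield the sharper factor $e^{2/(R(p+1))}$, but the paper deliberately keeps the two cases symmetric; the point is that the constant ``$4$'' arises from this simple geometric offset, not from a delicate two-parameter optimisation.

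Also, note that the paper's Lemma includes $N\geq 1$ and the endpoint $R\in\{0,1\}$; your remark that these are ``handled directly from the definition of $V$'' glosses over the fact that for $R=0$ only the boundary-concentration construction is available and for $R=1$ only the interior one (as the paper is careful to note: the first construction is for $R\in[0,1)$ and the second for $R\in(0,1]$).
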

\begin{proof}
 Let $R \in [0,1)$. Setting $x_\alpha=(1-\frac{1}{\alpha},0,...,0),$  for any $\omega \in \mathcal D(B)\backslash\{0\}$ the function $w_\alpha=\omega (\alpha(\cdot-x_\alpha))$ is supported in $B\setminus B_{R}$ for all $\alpha > \frac{2}{1-R}$. Moreover since the radial profile $V_{R}(r)$ is increasing for $r>R$ we obtain for $\alpha > \frac{2}{1-R}$
\[
S_{\alpha, R} \leq   \ddfrac{\int_B |D w_\alpha|^2 \dif x}{\left(\int_B V_{R}(x)|w_\alpha|^{p+1}\dif x\right)^{\frac{2}{p+1}}} \leq \alpha ^{2-N+\frac{2N}{p+1}} \ddfrac{\int_B |D \omega|^2 \dif x}{\left(\int_B \left(1- \frac{2}{(1-R)}\frac{1}{\alpha}\right)^\alpha |\omega|^{p+1}\dif x\right)^{\frac{2}{p+1}}}.
\]
Since $\lim_{\alpha\rightarrow\infty} \left(1- \frac{2}{(1-R)}\frac{1}{\alpha}\right)^\alpha=e^{-\frac{2}{1-R}}$, we infer that
\begin{equation}\label{neweqexp1}
\limsup_{\alpha\rightarrow \infty} S_{\alpha,R} \, \alpha^{N-2-\frac{2N}{p+1}} \leq e^{\frac{4}{(1-R)(p+1)}}S \qquad \text{for all} \ \ R \in [0,1).
\end{equation}

On the other hand, let $R \in (0,1]$. In this case we have $V_R$ is decreasing for $r < R$. Setting $x_\alpha=(\frac{1}{\alpha},0,...,0),$ for any $\omega \in \mathcal D(B)\backslash\{0\}$ the function $w_\alpha=\omega (\alpha(\cdot-x_\alpha))$ is supported in $B_{R}$ for all $\alpha > \frac{2}{R}$. Hence, for $\alpha > \frac{2}{R}$, we obtain
\[
S_{\alpha,R} \leq   \ddfrac{\int_B |D w_\alpha|^2 \dif x}{\left(\int_B V_{R}(|x|)|w_\alpha|^{p+1}\dif x\right)^{\frac{2}{p+1}}}\leq \alpha ^{2-N+\frac{2N}{p+1}} \ddfrac{\int_B |D \omega|^2 \dif x}{\left(\int_B \left(1- \frac{2}{\alpha R}\right)^\alpha |\omega|^{p+1}\dif x\right)^{\frac{2}{p+1}}}.
\]
Since $\lim_{\alpha\rightarrow\infty} \left(1- \frac{2}{\alpha R}\right)^\alpha=e^{-\frac{2}{R}}$, we infer that
\begin{equation}\label{neweqexp2}
\limsup_{\alpha\rightarrow \infty} S_{\alpha,R} \, \alpha^{N-2-\frac{2N}{p+1}} \leq e^{\frac{4}{R(p+1)}}S \qquad \text{for all} \ \ R \in (0,1].
\end{equation}
Then the conclusion follows from \eqref{neweqexp1} and \eqref{neweqexp2}.
\end{proof}

\begin{remark}
Note that for $R=1/2$ the same estimate can be proved using a convex combination $\omega_\alpha=t\omega (\alpha(\cdot-x_\alpha))+(1-t)\omega (\alpha(\cdot-x'_\alpha))$ with any $t\in[0,1]$ where $x_\alpha=(\frac{1}{\alpha},0,...,0),$ and $x'_\alpha=(1-\frac{1}{\alpha},0,...,0).$\end{remark}

\section{Relation with \eqref{eqGNN} and \eqref{classH} equations and proof of Proposition \ref{GNNH}}\label{RelationHenon}

A first consequence of the bound \eqref{eq:boundconstant} on the best constant $S_\alpha$ is the following proposition. Here we write $S_{\alpha}$ as $S_{\alpha,R}$ and $V$ as $V_R$ to emphasize their dependence on $R$.

\begin{prop} [Relation with the H\'enon equation]\label{prop:limit} 
Let $N \geq 1$. If $\alpha  >  0$ is fixed, then 
\begin{equation}\label{limitconstants}
S_{\alpha,0}=\lim_{R \to 0^+} S_{\alpha,R}.
\end{equation}
Moreover, given any positive sequence $(R_n)_{n\in \mathbb N}$ converging to zero, if $u^*_{\alpha,R_n}$ is an optimiser to $S_{\alpha,R_n}$ with $\int_B V_{R_n}(|x|)|u^*_{\alpha,R_n}|^{p+1}\dif x=1$, then $u^*_{\alpha,R_n} \to U_0$ in $H^1_0(B)$ and in $C^{2,\delta}(\overline{B}),$ up to a subsequence, where $U_0$ is an optimiser to the H\'enon quotient $S_{\alpha,0}$ with $\int_B |x|^{\alpha}|U_0|^{p+1}\dif x=1$.
\end{prop}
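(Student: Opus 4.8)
The plan is to prove the two assertions in sequence, using the uniform bound \eqref{eq:boundconstant} together with a compactness-and-elliptic-regularity argument for the convergence of optimisers. First I would establish \eqref{limitconstants} by a standard upper/lower semicontinuity argument. For the upper bound $\limsup_{R\to0^+}S_{\alpha,R}\le S_{\alpha,0}$, I fix a near-optimiser $u$ for $S_{\alpha,0}$ (equivalently an optimiser, since it is achieved), and note that $V_R(|x|)\to|x|^\alpha$ pointwise a.e.\ in $B$ as $R\to0^+$, with $0\le V_R\le 1$; by dominated convergence $\int_B V_R|u|^{p+1}\to\int_B|x|^\alpha|u|^{p+1}$, and since the numerator $\int_B|Du|^2$ is fixed, testing the $S_{\alpha,R}$-quotient against $u$ gives the claim. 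For the lower bound $\liminf_{R\to0^+}S_{\alpha,R}\ge S_{\alpha,0}$, I pick a sequence $R_n\to0^+$ realising the liminf and take the normalised optimisers $u^*_{\alpha,R_n}$ with $\int_B V_{R_n}|u^*_{\alpha,R_n}|^{p+1}=1$, so $S_{\alpha,R_n}=\int_B|Du^*_{\alpha,R_n}|^2$. Lemma \ref{Salpha} (or even just the trivial bound $S_{\alpha,R}\le S_{\alpha,1}<\infty$ obtained by testing with a fixed function, valid for each fixed $\alpha$) shows this sequence is bounded in $H^1_0(B)$, hence admits a subsequence converging weakly in $H^1_0(B)$, strongly in $L^{p+1}(B)$ (subcritical Rellich), and a.e.\ to some $U_0\in H^1_0(B)$.

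Next I would identify the weak limit and upgrade the convergence. Since $u^*_{\alpha,R_n}\to U_0$ in $L^{p+1}$ and $V_{R_n}\to|x|^\alpha$ in $L^\infty$-weak-$*$ (indeed $V_{R_n}\to|x|^\alpha$ pointwise a.e.\ with uniform bound, and $|u^*_{\alpha,R_n}|^{p+1}\to|U_0|^{p+1}$ in $L^1$), one gets $\int_B V_{R_n}|u^*_{\alpha,R_n}|^{p+1}\to\int_B|x|^\alpha|U_0|^{p+1}$, so the limit satisfies the normalisation $\int_B|x|^\alpha|U_0|^{p+1}=1$; in particular $U_0\neq0$. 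By weak lower semicontinuity of the Dirichlet energy, $\int_B|DU_0|^2\le\liminf\int_B|Du^*_{\alpha,R_n}|^2=\liminf S_{\alpha,R_n}$. On the other hand, testing the $S_{\alpha,0}$-quotient against $U_0$ and using the normalisation gives $S_{\alpha,0}\le\int_B|DU_0|^2$. Combining with the already-proven upper bound $\limsup S_{\alpha,R_n}\le S_{\alpha,0}$ forces all inequalities to be equalities: $S_{\alpha,R_n}\to S_{\alpha,0}$ (which, since the liminf-subsequence was arbitrary, yields \eqref{limitconstants}), $\int_B|DU_0|^2=S_{\alpha,0}$, so $U_0$ is an optimiser for the H\'enon quotient, and $\|Du^*_{\alpha,R_n}\|_{L^2}\to\|DU_0\|_{L^2}$. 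The latter together with weak convergence in the Hilbert space $H^1_0(B)$ upgrades to strong $H^1_0$-convergence.

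Finally, for the $C^{2,\delta}(\overline B)$ convergence I would run a bootstrap on the equations. Each $U_n:=u^*_{\alpha,R_n}$ solves $-\Delta U_n=S_{\alpha,R_n}V_{R_n}(|x|)|U_n|^{p-1}U_n$ weakly; the right-hand sides are bounded in $L^q(B)$ for every $q<\infty$ (since $U_n\to U_0$ strongly in $L^{p+1}$ and, by Rellich applied iteratively, in every $L^q$, using subcriticality to close the Moser-type iteration — or more simply: $|U_n|^{p-1}U_n$ is bounded in $L^{(p+1)/p}$, then $L^p$ elliptic estimates give $W^{2,(p+1)/p}\hookrightarrow L^{q_1}$ with $q_1$ a better exponent, and one iterates finitely many times until reaching $q>N/2$, using that $p<2^*-1$). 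Once the right-hand sides are bounded in $L^q$ with $q>N$, Calderón–Zygmund gives $U_n$ bounded in $W^{2,q}(B)\hookrightarrow C^{1,\gamma}(\overline B)$; then $V_{R_n}(|x|)|U_n|^{p-1}U_n$ is bounded in $C^{0,\gamma'}$ for some $\gamma'\in(0,\gamma]$ — here one must be slightly careful because $V_{R_n}$ is only Lipschitz near $r=R_n$ but $R_n\to0$, so on any fixed annulus away from the origin $V_{R_n}$ is eventually smooth with locally uniform $C^1$ bounds, and near the origin $|x|^\alpha$-type terms are at worst $C^{0,\min(\alpha,1)}$ — and Schauder estimates give $U_n$ bounded in $C^{2,\delta}(\overline B)$ for suitable $\delta$. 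By Arzelà–Ascoli and uniqueness of the limit $U_0$, the full sequence converges in $C^{2,\delta}(\overline B)$. The main obstacle is this last regularity step: one must track the (low) Hölder regularity of the weight near the origin and the near-$R_n$ kink of $V_{R_n}$ carefully enough to get a uniform-in-$n$ Schauder bound, though since $R_n\to0$ the kink escapes to the origin and causes no trouble on compact subsets of $B\setminus\{0\}$, and the origin contributes only the fixed weight $|x|^\alpha$.
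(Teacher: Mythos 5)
Your proof follows essentially the same route as the paper's: pick a sequence $R_n \to 0$, use a uniform bound on $S_{\alpha,R_n}$ to get an $H^1_0$-bound on the normalised optimisers, extract a weak $H^1_0$ / strong $L^{p+1}$ limit $U_0$, pass to the limit in the constraint, use weak lower semicontinuity of the Dirichlet energy against the H\'enon quotient, and obtain the matching upper bound by testing the $S_{\alpha,R}$-quotient against a fixed H\'enon optimiser with dominated convergence. You merely make explicit what the paper leaves implicit (the Hilbert-space upgrade from weak plus norm convergence to strong $H^1_0$-convergence, and the subcritical $W^{2,q}$-iteration behind the paper's ``classical bootstrap argument''); the one imprecision is your parenthetical claim $S_{\alpha,R}\le S_{\alpha,1}$, which is not literally true, but the surrounding idea (test with a fixed function whose weighted $L^{p+1}$-norm stays bounded away from zero as $R\to0$) is sound and does give the needed uniform bound.
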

\begin{proof}
Pick a sequence $(R_n)_{n\in \mathbb N}$ converging to zero and consider optimisers $u^*_{\alpha,R_n}$ to $S_{\alpha,R_n}$~with $$\int_B V_{R_n}(|x|)|u^*_{\alpha,n}|^{p+1}\dif x=1.$$ 
The bound \eqref{eq:boundconstant} on $S_{\alpha,R_n}$ implies that
$$\|u^*_{\alpha,R_n}\|_{H^1_0(B)}<C(\alpha,p,N).$$	
Passing if necessary to a subsequence we have that
\begin{equation*}
\begin{array}{ll}
u^*_{\alpha,R_n} \rightharpoonup U_0 & \text{in} \,\,H^1_0(B),\\
u^*_{\alpha,R_n}\rightarrow  U_0& \text{in} \,\,L^{p+1}(B),\\
u^*_{\alpha,R_n}\rightarrow  U_0 &  \text{almost everywhere on}\,\, B.
\end{array}
	\end{equation*}
	Since $V_{R_n}\rightarrow |x|^\alpha,$ by weakly lower semicontinuity we have that
\begin{equation}\label{limit henon}
S_{\alpha,0} \!\leq \!\ddfrac{\int_B |D U_0|^2 \dif x}{\left(\int_B |x|^\alpha| U_0|^{p+1}\dif x\right)^{\frac{2}{p+1}}}\!\leq \!\liminf_{n\rightarrow \infty} \ddfrac{\int_B |D u^*_{\alpha,R_n}|^2 \dif x}{\left(\int_B V_{R_n}(|x|)|u^*_{\alpha,R_n}|^{p+1}\dif x\right)^{\frac{2}{p+1}}}\!=\!\liminf_{n\rightarrow \infty} S_{\alpha,R_n}.
\end{equation}
On the other hand, pick $U\in H^1_0(B)$ which minimises the H\'enon quotient, namely
\begin{equation} \tag{$\mathcal S_{\alpha,0}$}\label{henon}
\ddfrac{\int_B |D U|^2 \dif x}{\left(\int_B |x|^\alpha| U|^{p+1}\dif x\right)^{\frac{2}{p+1}}}=S_{\alpha,0}:=\inf_{0\neq u\in H^1_0(B)} \ddfrac{\int_B |D u|^2 \dif x}{\left(\int_B |x|^\alpha| u|^{p+1}\dif x\right)^{\frac{2}{p+1}}}.    \end{equation}
Since
$$\int_B |x|^\alpha |U|^{p+1}\dif x= \lim_{n\rightarrow \infty}\int_B V_{R_n}(|x|)|U|^{p+1}\dif x,$$ and using \eqref{limit henon}, we have
\begin{equation} \label{comparison}
\limsup_{n\rightarrow \infty} S_{\alpha,R_n}\leq \limsup_{n\rightarrow \infty} \ddfrac{\int_B |D U|^2 \dif x}{\left(\int_B V_{R_n}(|x|)| U|^{p+1}\dif x\right)^{\frac{2}{p+1}}} =S_{\alpha,0}.
\end{equation}
Therefore, from \eqref{limit henon} and \eqref{comparison}, $S_{\alpha,0}=\lim_{n \to \infty} S_{\alpha,R_n}$, $u^*_{\alpha,R_n} \to  U_0$ in $H^1_0(B)$ and since the sequence $(R_n)$ converging to zero was arbitrary we infer that  $S_{\alpha,0}=\lim_{R \to 0^+} S_{\alpha,R}$. The sub-criticality of $p$ allows to perform a classical bootstrap argument to show that the convergence is in $C^{2,\delta}(\overline{B}),$ and this concludes the proof. 
\end{proof}

\begin{proof}[Proof of Proposition \ref{GNNH}] It follows directly from Proposition \ref{prop:limit} combined with the symmetry breaking result in \cite{Smets, BW}.
\end{proof}

We end this section with a proposition which highlights the connection with the limiting problem (\ref{eqGNN}).

\begin{prop} [Relation with the equation \eqref{eqGNN}]\label{prop:limit1} 
Let $N \geq 1$. For any $\alpha>0,$ it holds that
\begin{equation}\label{limitconstants}
S_{\alpha,1}=\inf_{0\neq u\in H^1_0(B)} \ddfrac{\int_B |D u|^2 \dif x}{\left(\int_B (1-|x|)^\alpha|u|^{p+1}\dif x\right)^{\frac{2}{p+1}}} =\lim_{R \to 1^-} S_{\alpha,R}.
\end{equation}
Moreover, given any sequence $(R_n)_{n\in \mathbb N}$ converging to $1^-$, if $u^*_{\alpha,R_n}$ is an optimiser to $S_{\alpha,R_n}$ with $\int_B V_{R_n}(|x|)|u^*_{\alpha,R_n}|^{p+1}\dif x=1$, then $u^*_{\alpha,R_n} \to U_1$ in $H^1_0(B)$ and in $C^{2,\delta}(\overline{B}),$ up to a subsequence, where $U_1$ is an optimiser to the quotient $S_{\alpha,1}$ with $\int_B (1-|x|)^{\alpha}|U_1|^{p+1}\dif x=1$.
\end{prop}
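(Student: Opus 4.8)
The plan is to mimic almost verbatim the proof of Proposition \ref{prop:limit}, since the two statements are structurally identical: the only change is that the shell of zeroes moves to the boundary $R=1$ rather than to the origin $R=0$, and the limiting weight is $(1-|x|)^\alpha$ rather than $|x|^\alpha$. The key point that makes the argument work is again the uniform bound \eqref{eq:boundconstant} from Lemma \ref{Salpha}, which holds for all $R\in[0,1]$ and in particular gives $\|u^*_{\alpha,R_n}\|_{H^1_0(B)}\le C(\alpha,p,N)$ independently of $n$ once $R_n$ is close to $1$.

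First I would fix $\alpha>0$ and a sequence $R_n\to 1^-$, take optimisers $u^*_{\alpha,R_n}$ normalised by $\int_B V_{R_n}(|x|)|u^*_{\alpha,R_n}|^{p+1}\dif x=1$, and use \eqref{eq:boundconstant} to extract a subsequence with $u^*_{\alpha,R_n}\rightharpoonup U_1$ weakly in $H^1_0(B)$, strongly in $L^{p+1}(B)$, and a.e.\ in $B$. The crucial analytic input is the pointwise convergence $V_{R_n}(|x|)\to (1-|x|)^\alpha$ for every $x\in B\setminus\{0\}$ together with the uniform bound $0\le V_{R_n}\le 1$; combining this with the strong $L^{p+1}$ convergence and dominated convergence yields $\int_B V_{R_n}(|x|)|u^*_{\alpha,R_n}|^{p+1}\dif x\to \int_B (1-|x|)^\alpha|U_1|^{p+1}\dif x=1$, so the limit is admissible and nontrivial. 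Weak lower semicontinuity of the Dirichlet integral then gives
$$
S_{\alpha,1}\le \ddfrac{\int_B |D U_1|^2\dif x}{\left(\int_B (1-|x|)^\alpha|U_1|^{p+1}\dif x\right)^{\frac{2}{p+1}}}\le \liminf_{n\to\infty} S_{\alpha,R_n}.
$$
For the reverse inequality I would take any fixed $U\in H^1_0(B)$ achieving $S_{\alpha,1}$ (existence by direct minimisation, exactly as for $S_\alpha$ and $S_{\alpha,\mathrm{rad}}$), observe that $\int_B V_{R_n}(|x|)|U|^{p+1}\dif x\to\int_B(1-|x|)^\alpha|U|^{p+1}\dif x$ again by dominated convergence, and conclude $\limsup_n S_{\alpha,R_n}\le S_{\alpha,1}$ by plugging $U$ into the quotient defining $S_{\alpha,R_n}$. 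Hence $S_{\alpha,R_n}\to S_{\alpha,1}$ and $\|Du^*_{\alpha,R_n}\|_{L^2}\to\|DU_1\|_{L^2}$, which upgrades the weak convergence to strong convergence in $H^1_0(B)$; since the sequence was arbitrary, $S_{\alpha,1}=\lim_{R\to1^-}S_{\alpha,R}$. Finally, $U_1$ solves $-\Delta U_1 = S_{\alpha,1}(1-|x|)^\alpha U_1^p$ weakly, the right-hand side lies in $L^q$ for all $q$ since $p$ is subcritical, so elliptic regularity and a standard bootstrap give $U_1\in C^{2,\delta}(\overline B)$ and, passing to a further subsequence, $C^{2,\delta}$ convergence.

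The only point that needs a little care — and is the closest thing to an obstacle — is the behaviour of $V_{R_n}$ near the origin: on $B_{R_n}$ one has $V_{R_n}(r)=(1-r/R_n)^\alpha$, which near $r=0$ is close to $1$, whereas the target weight $(1-|x|)^\alpha$ is also close to $1$ there, so in fact there is no singularity and the pointwise limit is genuinely $(1-|x|)^\alpha$ on all of $B$, uniformly on compact subsets of $\{|x|<1\}$; the bound $0\le V_{R_n}\le1$ makes dominated convergence immediate. This is in contrast with Proposition \ref{prop:limit}, where the subtlety was the concentration of mass of $V_{R_n}$ near the origin; here the structure is if anything simpler, and the proof goes through mutatis mutandis. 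I would therefore write the proof essentially as ``identical to that of Proposition \ref{prop:limit}, replacing $|x|^\alpha$ by $(1-|x|)^\alpha$ and noting that $V_{R_n}\to(1-|x|)^\alpha$ pointwise on $B$ with $0\le V_{R_n}\le1$,'' spelling out only the two dominated-convergence steps and the semicontinuity/strong-convergence argument.
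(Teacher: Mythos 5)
Your proposal is correct and follows exactly the same route as the paper, whose proof of Proposition~\ref{prop:limit1} literally reads ``The proof is identical to that of Proposition~\ref{prop:limit}. We leave out the details.'' You have correctly identified the one point that needs (minor) verification, namely that $V_{R_n}\to(1-|x|)^\alpha$ pointwise (indeed uniformly on compact subsets of $B$) with the uniform bound $0\le V_{R_n}\le 1$, so the two dominated-convergence steps and the rest of the argument transfer verbatim.
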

\begin{proof}
The proof is identical to that of Proposition \ref{prop:limit}. We leave out the details.
\end{proof}

\section{Growth estimates for $C_{\alpha,\textrm{rad}}$}\label{main body}
The estimates which follow are essentially based on implementing Ni's inequality into a suitable rewriting of Nehari's and Pohozaev's identities associated with $u_{\alpha,\textrm{rad}}.$
\subsection*{Estimate for $\int_{\partial B} |D u_{\alpha, \textrm{rad}}|^2$ from above}
\begin{lem}\label{lemma1}
Let $N\geq 2$, $R \in [0,1]$ and $K^*(N,p)= |\mathbb S^{N-1}|^{\frac{1-p}{p+1}}\left(\frac{2(p+1)}{p-1}\right)^{\frac{2p}{p+1}}.$ Then
\begin{equation}\label{bound from above}
\int_{\partial B}|D u_{\alpha, \textrm{rad}}|^2 \dif \sigma\leq K^*(N,p)\frac{\left(C_{\alpha,\textrm{rad}}\right)^{2p/(p+1)}}{(\alpha+1)^{2/(p+1)}}.
\end{equation}
\end{lem}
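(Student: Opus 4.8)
The plan is to use that $u:=u_{\alpha,\textrm{rad}}$ is radial, so that on $\partial B$ its gradient is purely radial and $\int_{\partial B}|Du|^2\dif\sigma=|\mathbb S^{N-1}|\,u'(1)^2$, and then to control $u'(1)$ by integrating the equation in radial form. By elliptic regularity $u\in C^1([0,1])$ (the right-hand side $V(|x|)u^p$ is bounded, as $0\le V\le 1$ and $u$ is bounded), so writing \eqref{eq rescaled minimiser} as $-(r^{N-1}u')'=r^{N-1}V(r)u^p$ on $(0,1)$ and integrating over $(0,1)$ — the boundary term at the origin vanishing since $N\geq2$ and $u'$ is bounded near $0$ — I obtain
\[
-u'(1)=\int_0^1 r^{N-1}V(r)u^p\dif r>0 .
\]

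Next I would estimate the right-hand side by Hölder's inequality with exponents $\frac{p+1}{p}$ and $p+1$, after the factorisation $r^{N-1}V(r)u^p=\bigl(r^{N-1}V(r)u^{p+1}\bigr)^{\frac{p}{p+1}}\bigl(r^{N-1}V(r)\bigr)^{\frac{1}{p+1}}$, which gives
\[
u'(1)^2\le\Bigl(\int_0^1 r^{N-1}V(r)u^{p+1}\dif r\Bigr)^{\frac{2p}{p+1}}\Bigl(\int_0^1 r^{N-1}V(r)\dif r\Bigr)^{\frac{2}{p+1}} .
\]
For the first factor I pass back to $\R^N$ and use the Nehari identity together with the energy formula: testing \eqref{eq rescaled minimiser} with $u$ yields $\int_B|Du|^2\dif x=\int_B V(|x|)u^{p+1}\dif x$, hence $C_{\alpha,\textrm{rad}}=I_\alpha(u)=\frac{p-1}{2(p+1)}\int_B V(|x|)u^{p+1}\dif x$ (equivalently \eqref{relation}), so that $\int_0^1 r^{N-1}V(r)u^{p+1}\dif r=\frac{1}{|\mathbb S^{N-1}|}\frac{2(p+1)}{p-1}C_{\alpha,\textrm{rad}}$. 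For the second factor I use $r^{N-1}\le1$ on $[0,1]$ together with the explicit value $\int_0^1 V(r)\dif r=\frac1{\alpha+1}$: splitting at $r=R$ and substituting $s=r/R$, resp.\ $s=\tfrac{1-r}{1-R}$, the two pieces equal $\tfrac{R}{\alpha+1}$ and $\tfrac{1-R}{\alpha+1}$ (with the obvious direct computation when $R\in\{0,1\}$), whose sum is $\tfrac1{\alpha+1}$; thus $\int_0^1 r^{N-1}V(r)\dif r\le\frac1{\alpha+1}$.

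Assembling the three displays gives
\[
\int_{\partial B}|Du|^2\dif\sigma=|\mathbb S^{N-1}|\,u'(1)^2\le|\mathbb S^{N-1}|\Bigl(\tfrac{1}{|\mathbb S^{N-1}|}\tfrac{2(p+1)}{p-1}C_{\alpha,\textrm{rad}}\Bigr)^{\frac{2p}{p+1}}\Bigl(\tfrac1{\alpha+1}\Bigr)^{\frac{2}{p+1}},
\]
and since $|\mathbb S^{N-1}|^{1-\frac{2p}{p+1}}=|\mathbb S^{N-1}|^{\frac{1-p}{p+1}}$ the prefactor is exactly $K^*(N,p)=|\mathbb S^{N-1}|^{\frac{1-p}{p+1}}\bigl(\frac{2(p+1)}{p-1}\bigr)^{\frac{2p}{p+1}}$, which is \eqref{bound from above}. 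The argument is mostly bookkeeping; the two places that deserve a line of justification are the vanishing of the boundary term at the origin (this is where $N\geq2$ enters, and it requires $C^1$ regularity of $u$ up to $r=0$) and the identity $\int_0^1 V=\frac1{\alpha+1}$, which is what produces the decay $(\alpha+1)^{-2/(p+1)}$ and relies on the affine-to-the-power-$\alpha$ profile of $V$ together with the complementary contributions of $[0,R]$ and $[R,1]$.
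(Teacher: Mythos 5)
Your proof is correct and follows essentially the same route as the paper: both integrate the equation to express $|\mathbb{S}^{N-1}|\,u'(1)^2$ (equivalently $\int_{\partial B}|Du|^2\dif\sigma$) in terms of $\int_B V u^p\dif x$, then apply H\"older with exponents $\frac{p+1}{p}$ and $p+1$, the Nehari identity $\int_B V u^{p+1}\dif x=\frac{2(p+1)}{p-1}C_{\alpha,\mathrm{rad}}$, and the bound $\int_B V\dif x\le\frac{|\mathbb{S}^{N-1}|}{\alpha+1}$. The only cosmetic differences are that you phrase the first step as integrating the radial ODE over $(0,1)$ rather than applying the divergence theorem over $B$, and you obtain the last bound by first discarding $r^{N-1}\le1$ and then computing $\int_0^1 V(r)\dif r=\frac1{\alpha+1}$ exactly, which is marginally cleaner than the paper's term-by-term estimate.
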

\begin{proof}
Integrating (\ref{eq rescaled minimiser}) over $B,$ by radial symmetry and the divergence theorem we have:
$$|\mathbb S^{N-1}|\int_{\partial B}|D u_{\alpha, \textrm{rad}}|^2 \dif \sigma=\left(\int_BV(|x|)(u_{\alpha, \textrm{rad}})^{p}\dif x\right)^2.$$
Then, by H\"older's inequality we obtain:
\begin{equation}\label{above partial}
\int_{\partial B}|D u_{\alpha, \textrm{rad}}|^2 \dif \sigma\leq \ddfrac{1}{|\mathbb S^{N-1}|} \left(\int_BV(|x|)(u_{\alpha, \textrm{rad}})^{p+1}\dif x\right)^{2p/(p+1)}\left(\int_BV(|x|) \dif x\right)^{2/(p+1)}.
\end{equation}
By the definition (\ref{least energy}) and testing (\ref{eq rescaled minimiser}) with $u_{\alpha, \textrm{rad}}$ we infer that
\begin{equation}\label{Nehari}
\int_B V(|x|) (u_{\alpha, \textrm{rad}})^{p+1}\dif x=\frac{2(p+1)}{p-1} C_{\alpha,\textrm{rad}}.
\end{equation}
To estimate $\int_BV(|x|) \dif x,$ write
\begin{equation*}
\begin{array}{ll}
\displaystyle{\int_B} V(|x|) \dif x &= \displaystyle{\int_{B_{R}}}V(|x|) \dif x + \displaystyle{ \int_{B\setminus B_{R}}} V(|x|) \dif x \\
& = |\mathbb S^{N-1}| \left[\displaystyle{\int_0^R}  \left(1-\frac{r}{R}\right)^\alpha r^{N-1}\dif r+\displaystyle{\int^1_R} \left(1-\frac{1-r}{1-R}\right)^\alpha r^{N-1}\dif r\right] \\
& = |\mathbb S^{N-1}| \left[I+II\right].
\end{array}
\end{equation*}
Setting $s=1-r/R$ we have $$I=\int_0^1 s^\alpha \left(R(1-s)\right)^{N-1}R\dif s\leq R\int^1_0 s^\alpha \dif s= \frac{R}{\alpha+1}.$$
A similar estimate using the change of variable $s=\left(1-\frac{1-r}{1-R}\right),$ yields
 $$II \leq \int ^1_0 s^\alpha (1-R) \dif s = \frac{1-R}{\alpha+1}.$$
 It follows that
 \begin{equation}\label{V int}
 \displaystyle{\int_B} V(|x|) \dif x \leq \frac{|\mathbb S^{N-1}|}{\alpha+1} .
 \end{equation}
Finally, inserting (\ref{Nehari}) and (\ref{V int}) into (\ref{above partial}), we obtain (\ref{bound from above}). This concludes the proof.
\end{proof}

\subsection*{Estimate for $\int_{\partial B} |D u_{\alpha, \textrm{rad}}|^2$ from below}

\begin{lem}\label{lemma2}
Let $N\geq 3$ and $R\in [0,1]$. Then
\begin{multline}\label{boundbelow}
\int_{\partial B} |Du_{\alpha,\textrm{rad}}|^2\dif \sigma\geq \left(\frac{2(N+\alpha)}{p+1}-(N-2)\right)\ddfrac{2(p+1)}{p-1} C_{\alpha,\textrm{rad}} \\ - K_*(N,p) R^{\beta+1} \frac{\alpha \Gamma(\alpha)}{\Gamma(\alpha+\beta+1)} \left(C_{\alpha,\textrm{rad}}\right)^{\frac{p+1}{2}}
\end{multline} 
where
\begin{equation}\label{kappa}
K_*(N,p)=2^{\frac{p+3}{2}}(p+1)^{\frac{p-1}{2}}[(p-1)(N-2)]^{-\frac{p+1}{2}} |\mathbb S ^{N-1}|^{\frac{1-p}{2}}\Gamma(\beta+1) 
\end{equation} 
and 
\begin{equation}\label{beta}
\beta=N-1-(p+1)\frac{N-2}{2}.
\end{equation}
\end{lem}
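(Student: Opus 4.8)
The plan is to start from the same Pohozaev-type identity that underlies the classical H\'enon analysis, applied to the radial solution $u_{\alpha,\textrm{rad}}$ of \eqref{eq rescaled minimiser}. Multiplying the equation by $x\cdot\nabla u_{\alpha,\textrm{rad}}$ and integrating over $B$ produces the standard relation
\[
\frac{N-2}{2}\int_B |Du_{\alpha,\textrm{rad}}|^2\dif x - \frac{1}{2}\int_{\partial B}|Du_{\alpha,\textrm{rad}}|^2\dif\sigma = \int_B V(|x|)\,(x\cdot\nabla u_{\alpha,\textrm{rad}})\,(u_{\alpha,\textrm{rad}})^p\dif x,
\]
where the boundary term on the left uses $u_{\alpha,\textrm{rad}}=0$ on $\partial B$, so $\nabla u_{\alpha,\textrm{rad}}$ is purely normal there. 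The right-hand side is rewritten by integrating $x\cdot\nabla u_{\alpha,\textrm{rad}}\,(u_{\alpha,\textrm{rad}})^p = \frac{1}{p+1}\,x\cdot\nabla\big((u_{\alpha,\textrm{rad}})^{p+1}\big)$ by parts, which moves the divergence onto $V(|x|)x$ and yields $-\frac{1}{p+1}\int_B \operatorname{div}(V(|x|)x)\,(u_{\alpha,\textrm{rad}})^{p+1}\dif x = -\frac{1}{p+1}\int_B \big(N\,V(|x|) + |x|V'(|x|)\big)(u_{\alpha,\textrm{rad}})^{p+1}\dif x$. Combining this with the Nehari identity \eqref{Nehari} and the energy identity $\int_B |Du_{\alpha,\textrm{rad}}|^2\dif x = \frac{2(p+1)}{p-1}C_{\alpha,\textrm{rad}}$, everything reduces to controlling the term $\frac{1}{p+1}\int_B |x|\,V'(|x|)\,(u_{\alpha,\textrm{rad}})^{p+1}\dif x$.

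The key observation is that $rV'(r) = \alpha V(r)$ on the outer annulus $R<r<1$ (where $V$ is increasing, so this contributes with a favourable sign together with the $NV$ term, giving the $\frac{2(N+\alpha)}{p+1}$ factor after regrouping), while on the inner ball $0<r<R$ one has $rV'(r) = -\frac{\alpha r/R}{1-r/R}V(r)$, which is negative and produces the error term that must be bounded above. So the next step is to estimate
\[
\frac{1}{p+1}\int_{B_R} \frac{\alpha (r/R)}{1-r/R}\left(1-\frac{r}{R}\right)^\alpha (u_{\alpha,\textrm{rad}})^{p+1}\dif x
\]
from above. For this I would invoke Ni's inequality \cite{Ni}: for a radial $H^1_0(B)$ function one has the pointwise bound $|u_{\alpha,\textrm{rad}}(r)| \leq c_N\, r^{-(N-2)/2}\,\|Du_{\alpha,\textrm{rad}}\|_{L^2}$, with $c_N = (\,(N-2)|\mathbb S^{N-1}|\,)^{-1/2}$. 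Raising this to the power $p+1$, substituting, and using $\|Du_{\alpha,\textrm{rad}}\|_{L^2}^{p+1} = \big(\tfrac{2(p+1)}{p-1}C_{\alpha,\textrm{rad}}\big)^{(p+1)/2}$, the integral over $B_R$ collapses to a one-dimensional integral $\alpha\int_0^R (r/R)(1-r/R)^{\alpha-1} r^{N-1} r^{-(p+1)(N-2)/2}\dif r$. After the change of variable $s = 1-r/R$ this becomes, up to constants, $\alpha R^{\beta+1}\int_0^1 s^{\alpha-1}(1-s)^{N-1-(p+1)(N-2)/2}\dif s = \alpha R^{\beta+1} B(\alpha,\beta+1) = \alpha R^{\beta+1}\frac{\Gamma(\alpha)\Gamma(\beta+1)}{\Gamma(\alpha+\beta+1)}$, with $\beta$ exactly as in \eqref{beta}; here one needs $\beta>-1$, i.e. the subcriticality $p<\frac{N+2}{N-2}$, for the Beta integral to converge. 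Tracking the explicit constants $c_N^{p+1}$, $\big(\tfrac{2(p+1)}{p-1}\big)^{(p+1)/2}$, $|\mathbb S^{N-1}|$ from the $\dif x = |\mathbb S^{N-1}|r^{N-1}\dif r$ reduction, and $\frac{1}{p+1}$ should reproduce $K_*(N,p)$ as in \eqref{kappa}; the factor $\Gamma(\beta+1)$ there is the one pulled out of the Beta function.

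The main obstacle I anticipate is twofold and largely bookkeeping rather than conceptual: first, correctly regrouping the Pohozaev terms so that the outer-annulus contribution of $rV'(r)=\alpha V(r)$ merges cleanly with the $NV(r)$ term to give the coefficient $\frac{2(N+\alpha)}{p+1}-(N-2)$ in front of $C_{\alpha,\textrm{rad}}$ — one has to be careful that the inner annulus also carries an $NV$ piece that belongs with the ``good'' term, so the negative error is \emph{only} the $|x|V'$ part on $B_R$ and nothing else; second, chasing the exact numerical constant $K_*(N,p)$ through Ni's inequality and the Beta-function substitution without dropping powers of $2$, $(p\pm1)$, $(N-2)$ or $|\mathbb S^{N-1}|$. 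A secondary subtlety is justifying the Pohozaev identity itself, which requires enough regularity of $u_{\alpha,\textrm{rad}}$ up to the boundary (available by elliptic bootstrap since $p$ is subcritical and $V$ is bounded, as already used in Proposition \ref{prop:limit}), and noting that Ni's inequality is applied to $u_{\alpha,\textrm{rad}}\in H^1_{0,\textrm{rad}}(B)$, which is legitimate. No sign ambiguity arises because on $B_R$ the weight $rV'(r)$ is manifestly $\leq 0$, so dropping it would only weaken the lower bound — but we keep it with its explicit size to get the sharp dependence on $R$ and $\alpha$ that is needed later in Section \ref{symmetry breaking proof}.
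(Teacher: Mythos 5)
Your overall strategy — the Pohozaev identity for the rescaled radial minimiser, followed by Ni's pointwise bound and an Euler Beta-function evaluation of the resulting one-dimensional integral — is exactly the paper's approach. However, there is a genuine gap in your error decomposition on the inner ball, and a compensating algebra slip in the change of variable that masks it. You claim that ``the negative error is only the $|x|V'$ part on $B_R$,'' i.e.\ that the error is $\frac{2\alpha}{p+1}\int_{B_R}\frac{r}{R}\big(1-\frac{r}{R}\big)^{\alpha-1}|u_{\alpha,\textrm{rad}}|^{p+1}\dif x.$ This cannot produce the coefficient $\frac{2(N+\alpha)}{p+1}-(N-2)$ in front of $\int_B V|u_{\alpha,\textrm{rad}}|^{p+1}$. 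To arrive at that coefficient one must have $\alpha V(r)$ available over \emph{all} of $B$, but on $B_R$ the identity $rV'(r)=\alpha V(r)+\frac{\alpha R}{1-R}\big(1-\frac{1-r}{1-R}\big)^{\alpha-1}$ holds only on $B\setminus B_R$ (note: it is an \emph{inequality} $rV'\geq\alpha V$ there, not an equality as you wrote), and on $B_R$ one has $rV'(r)=\alpha V(r)-\alpha\big(1-\frac{r}{R}\big)^{\alpha-1}.$ The error that must be controlled is therefore $\frac{2}{p+1}\int_{B_R}\big(\alpha V-rV'\big)|u_{\alpha,\textrm{rad}}|^{p+1}\dif x=\frac{2\alpha}{p+1}\int_{B_R}\big(1-\frac{r}{R}\big)^{\alpha-1}|u_{\alpha,\textrm{rad}}|^{p+1}\dif x,$ not just the $-rV'$ piece. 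The $\alpha V$ contribution you drop is in fact the \emph{dominant} one asymptotically: after Ni's bound it produces $R^{\beta+1}B(\alpha+1,\beta+1)\sim\Gamma(\beta+1)\alpha^{-\beta-1}$, whereas your $-rV'$ piece gives only $R^{\beta+1}B(\alpha,\beta+2)\sim\Gamma(\beta+2)\alpha^{-\beta-2}$, one full power of $\alpha$ smaller.

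What saves your final formula is a second, compensating error in the substitution $s=1-r/R$. Your stated integrand carries the extra factor $r/R$, so the exponent of $(1-s)$ should be $\beta+1$, i.e.\ you should obtain $R^{\beta+1}B(\alpha,\beta+2)$, not $R^{\beta+1}B(\alpha,\beta+1)$ as you write. Dropping the $r/R$ factor in the substitution precisely ``restores'' the missing $\alpha V$ contribution, since $B(\alpha,\beta+2)+B(\alpha+1,\beta+1)=B(\alpha,\beta+1)$. So the end result is correct, but for the wrong reason; as written the argument would not survive a careful check of the constants, and an attempt to ``improve'' the bound by keeping only the $-rV'$ piece would actually destroy the growth rate used later in the symmetry-breaking argument. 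Two smaller bookkeeping items: the Pohozaev identity as displayed has a sign error (it should read $-\frac{N-2}{2}\int_B|Du|^2-\frac{1}{2}\int_{\partial B}|Du|^2=\int_B V(x\cdot\nabla u)u^p$, otherwise you land on $N-2+\frac{2N}{p+1}$ instead of $\frac{2N}{p+1}-(N-2)$), and there is a factor $2$ missing in front of $\frac{1}{p+1}$ in your displayed error integral.
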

\begin{proof}
From the Pohozaev identity 
\begin{multline}\label{Pohozaev}
\int_{\partial B} |Du_{\alpha,\textrm{rad}}|^2\dif \sigma= \left(\frac{2N}{p+1}-(N-2)\right)\int_B V(|x|) |u_{\alpha,\textrm{rad}}|^{p+1}\dif x \\+\frac{2}{p+1}\int_B V'(|x|)|x| |u_{\alpha,\textrm{rad}}|^{p+1}\dif x.
\end{multline} 
Let $R\in [0,1)$. By using the definition of $V$ it is convenient to write
\begin{multline*}
\displaystyle{\int_B} V'(|x|)|x| |u_{\alpha,\textrm{rad}}|^{p+1}\dif x =  \alpha \displaystyle{\int_{B}} V(|x|) |u_{\alpha,\textrm{rad}}|^{p+1}\dif x 
 - \alpha \displaystyle{\int_{B_R} }\left(1-\frac{|x|}{R}\right)^{\alpha-1} |u_{\alpha,\textrm{rad}}|^{p+1}\dif x\\
 + \ddfrac{\alpha R}{1-R}\displaystyle{\int_{B\setminus B_R}} \left(1-\frac{1-|x|}{1-R}\right)^{\alpha-1} |u_{\alpha,\textrm{rad}}|^{p+1}\dif x\,.
	\end{multline*}
Inserting in (\ref{Pohozaev}) and taking into account the positivity of the last integral, we obtain 
\begin{multline}\label{Pohozaev2}
\int_{\partial B} | Du_{\alpha,\textrm{rad}}|^2\dif \sigma\geq \left(\frac{2(N+\alpha)}{p+1}-(N-2)\right)\int_B V(|x|) |u_{\alpha,\textrm{rad}}|^{p+1}\dif x\\-\frac{2\alpha}{p+1}\displaystyle{\int_{B_R} }\left(1-\frac{|x|}{R}\right)^{\alpha-1} |u_{\alpha,\textrm{rad}}|^{p+1}\dif x.
\end{multline} 
By Ni's inequality \cite[eq. (4)]{Ni}
\begin{equation}\label{Niineq} |u_{\alpha,\textrm{rad}}(x)| \leq \frac{1}{\left(|\mathbb S ^{N-1}|(N-2)\right)^{1/2}} \frac{ \| D u_{\alpha,\textrm{rad}} \|_{ L^2(B)}} {|x|^{(N-2)/2}} ,
\end{equation}
we estimate 
\begin{equation*}
\begin{array}{ll}
\displaystyle{\int_{B_R} }\left(1-\frac{|x|}{R}\right)^{\alpha-1} |u_{\alpha,\textrm{rad}}|^{p+1}\dif x \leq  & |\mathbb S^{N-1}| \left(\frac{1}{|\mathbb S ^{N-1}|(N-2)}\right)^{\frac{p+1}{2}}\left[C_{\alpha,\textrm{rad}} \frac{2(p+1)}{p-1}\right]^{\frac{p+1}{2}}\\
&\times \displaystyle{\int^R_0} \left( 1-\frac{r}{R}\right)^{\alpha-1}r^{N-1-(p+1)\frac{N-2}{2}}\dif r.
\end{array}
	\end{equation*}
Set $$I_R= \displaystyle{\int^R_0} \left( 1-\frac{r}{R}\right)^{\alpha-1}r^{N-1-(p+1)\frac{N-2}{2}}\dif r.$$
By the change of variable $s=1-r/R$ and by the definition of $\beta$ we can express $$I_R=R^{\beta+1}\displaystyle{\int^1_0} s^{\alpha-1}(1-s)^{\beta}\dif s=R^{\beta+1}\ddfrac{\Gamma(\alpha)\Gamma(\beta+1)}{\Gamma(\alpha+\beta+1)},$$ where we have used the well-known expression of Euler's integral of the first kind in terms of Beta function $B(\alpha,\beta+1).$
Coming back to (\ref{Pohozaev2}) this immediately yields the desired estimate for $R\in [0,1)$. Finally, observing that all the calculations after \eqref{Pohozaev2} also holds in case $R=1$, we conclude the proof. 
\end{proof} 

\begin{lem}\label{lemma2'}
Let $N=2$ and $R\in [0,1]$. Then
\begin{equation}\label{boundbelow}
\int_{\partial B} |Du_{\alpha,\textrm{rad}}|^2\dif \sigma\geq \frac{4(2+\alpha)}{p-1} C_{\alpha,\textrm{rad}}- K_*(2,p) R^{\beta+1} \frac{\alpha \Gamma(\alpha)}{\Gamma(\alpha+\beta+1)} \left(C_{\alpha,\textrm{rad}}\right)^{\frac{p+1}{2}}
\end{equation} 
where
\begin{equation}\label{kappa}
K_*(2,p)= 4  \pi^{\frac{1-p}{2}} (p+1)^{\frac{p-1}{2}}(p-1)^{-\frac{p+1}{2}} \Gamma(\beta+1) c_\varepsilon^{p+1},
\end{equation} 
\begin{equation}\label{beta}
\beta=1-(p+1)\varepsilon,
\end{equation}
and $c_\varepsilon=\sup_{r\in(0,1)}\left(r^\varepsilon\Big|\ln r\Big|^{1/2}\right),$ with $0<\varepsilon<\frac{2}{p+1}.$
\end{lem}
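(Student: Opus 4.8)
The plan is to transcribe verbatim the proof of Lemma \ref{lemma2}, substituting for Ni's inequality \eqref{Niineq}—which degenerates when $N=2$—the low-dimensional radial estimate of \cite[Lemma 2.5]{BonheureSerraTarallo}, namely $|u_{\alpha,\textrm{rad}}(x)|\leq (2\pi)^{-1/2}\,\bigl|\ln|x|\bigr|^{1/2}\,\|Du_{\alpha,\textrm{rad}}\|_{L^2(B)}$. First I would start from the Pohozaev identity \eqref{Pohozaev}, which holds in every dimension; for $N=2$ the coefficient $\frac{2N}{p+1}-(N-2)$ reduces to $\frac{4}{p+1}$, so that
\[
\int_{\partial B}|Du_{\alpha,\textrm{rad}}|^2\dif\sigma=\frac{4}{p+1}\int_B V(|x|)|u_{\alpha,\textrm{rad}}|^{p+1}\dif x+\frac{2}{p+1}\int_B V'(|x|)|x|\,|u_{\alpha,\textrm{rad}}|^{p+1}\dif x.
\]
Then, for $R\in[0,1)$, I would split $V'(|x|)|x|$ exactly as in the proof of Lemma \ref{lemma2}, drop the nonnegative integral over $B\setminus B_R$, and invoke the Nehari identity \eqref{Nehari} to obtain
\[
\int_{\partial B}|Du_{\alpha,\textrm{rad}}|^2\dif\sigma\geq\frac{4(2+\alpha)}{p-1}C_{\alpha,\textrm{rad}}-\frac{2\alpha}{p+1}\int_{B_R}\Bigl(1-\frac{|x|}{R}\Bigr)^{\alpha-1}|u_{\alpha,\textrm{rad}}|^{p+1}\dif x,
\]
where I used $\frac{2(2+\alpha)}{p+1}\cdot\frac{2(p+1)}{p-1}=\frac{4(2+\alpha)}{p-1}$.

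The second step is to estimate the remaining integral over $B_R$. Fixing $\varepsilon\in(0,\frac{2}{p+1})$ and using $\bigl|\ln|x|\bigr|^{1/2}\leq c_\varepsilon|x|^{-\varepsilon}$ on $B\setminus\{0\}$ together with the identity $\|Du_{\alpha,\textrm{rad}}\|_{L^2(B)}^2=\frac{2(p+1)}{p-1}C_{\alpha,\textrm{rad}}$ (which is again \eqref{Nehari}), the \cite{BonheureSerraTarallo} bound gives
\[
|u_{\alpha,\textrm{rad}}(x)|^{p+1}\leq\frac{c_\varepsilon^{p+1}}{(2\pi)^{(p+1)/2}}\Bigl(\frac{2(p+1)}{p-1}C_{\alpha,\textrm{rad}}\Bigr)^{(p+1)/2}|x|^{-(p+1)\varepsilon}.
\]
Passing to radial coordinates ($|\mathbb S^1|=2\pi$, $r^{N-1}=r$) reduces the integral to $\int_0^R(1-r/R)^{\alpha-1}r^{1-(p+1)\varepsilon}\dif r$; since $\beta=1-(p+1)\varepsilon>-1$ by the choice of $\varepsilon$, the substitution $s=1-r/R$ yields $R^{\beta+1}B(\alpha,\beta+1)=R^{\beta+1}\Gamma(\alpha)\Gamma(\beta+1)/\Gamma(\alpha+\beta+1)$. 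Collecting the numerical factors—in particular the combination $2\pi\cdot(2\pi)^{-(p+1)/2}=(2\pi)^{(1-p)/2}=2^{(1-p)/2}\pi^{(1-p)/2}$ coming from the surface measure and the constant in the radial bound—produces exactly $K_*(2,p)$ as in \eqref{kappa}, which proves the inequality for $R\in[0,1)$. For the endpoint $R=1$ one has $V(r)=(1-r)^\alpha$ and the identity $V'(r)r=\alpha V(r)-\alpha(1-r)^{\alpha-1}$ still holds, so every step after the decomposition carries over unchanged.

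I do not expect a genuine conceptual obstacle, since the argument is a direct adaptation of Lemma \ref{lemma2}; the points that need care are, first, checking that the admissible range $\varepsilon\in(0,\frac{2}{p+1})$ is exactly what simultaneously guarantees $c_\varepsilon<\infty$ and convergence of the Euler Beta integral $B(\alpha,\beta+1)$, and second, the bookkeeping of constants so as to land precisely on the stated $K_*(2,p)$. One should also confirm that the constant in \cite[Lemma 2.5]{BonheureSerraTarallo} is indeed $(2\pi)^{-1/2}=|\mathbb S^1|^{-1/2}$—the natural two-dimensional replacement of $(|\mathbb S^{N-1}|(N-2))^{-1/2}$ in \eqref{Niineq}; were it to differ by an absolute constant, that constant would simply enter the $c_\varepsilon^{p+1}$-type factors in $K_*(2,p)$ without affecting the structure of the estimate.
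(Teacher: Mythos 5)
Your proposal matches the paper's proof exactly: the paper likewise transcribes the proof of Lemma~\ref{lemma2}, replacing Ni's inequality with the logarithmic radial estimate of \cite[Lemma 2.5]{BonheureSerraTarallo}, majorizing $|\ln|x||^{1/2}$ by $c_\varepsilon|x|^{-\varepsilon}$, and reducing the boundary-shell integral to an Euler Beta function. Your constant bookkeeping (collecting $2\cdot2^{(1-p)/2}\cdot2^{(p+1)/2}=4$, etc.) lands precisely on $K_*(2,p)$, and your check that $\varepsilon\in(0,\frac{2}{p+1})$ makes $\beta+1>0$ is the right consistency check.
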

\begin{proof}
The only change with respect to the proof of Lemma \ref{lemma2} is in the use of Ni's inequality, which we replace by an estimate valid for $N=2$, namely
\begin{equation*}
|u_{\alpha,\textrm{rad}}(x)| \leq \left(\frac{ |\ln |x|| } {|\mathbb S ^{1}|}\right)^{1/2}\| D u_{\alpha,\textrm{rad}}\|_{ L^2(B)} ,
\end{equation*}
see \cite[Lemma 2.5]{BonheureSerraTarallo}. In view of the definition of $c_\varepsilon$ we write the above inequality as 
\begin{equation}\label{nonlog}
|u_{\alpha,\textrm{rad}}(x)| \leq \frac{c_\varepsilon}{(2\pi)^{1/2}} \frac{ \| D u_{\alpha,\textrm{rad}} \|_{ L^2(\R^2)}} { |x|^\varepsilon}.
\end{equation}
Hence
\begin{equation*}
\displaystyle{\int_{B_R} }\!\!\left(1-\frac{|x|}{R}\right)^{\alpha-1}\!\! |u_{\alpha,\textrm{rad}}|^{p+1}\dif x \leq   c_\varepsilon^{p+1}(2\pi)^{\frac{1-p}{2}}\!\!\left[C_{\alpha,\textrm{rad}} \frac{2(p+1)}{p-1}\right]^{\frac{p+1}{2}} \! \!\!
 \displaystyle{\int^R_0}\!\! \left( 1-\frac{r}{R}\right)^{\alpha-1}\!\!r^{1-(p+1)\varepsilon}\dif r.
	\end{equation*}
Set $$I_R= \displaystyle{\int^R_0} \left( 1-\frac{r}{R}\right)^{\alpha-1}r^{1-(p+1)\varepsilon} \dif r.$$
By the change of variable $s=1-r/R$ and setting $\beta=1-(p+1)\varepsilon$ we can express $$I_R=R^{\beta+1}\displaystyle{\int^1_0} s^{\alpha-1}(1-s)^{\beta}\dif s=R^{\beta+1}\ddfrac{\Gamma(\alpha)\Gamma(\beta+1)}{\Gamma(\alpha+\beta+1)}.$$ Using this estimate in (\ref{Pohozaev2}) gives immediately the statement for $N=2$. And this concludes the proof.
\end{proof}
\subsection*{Estimate for $C_{\alpha,\textrm{rad}}$}

\begin{lem}\label{lemma3}
Let $N\geq 2$ and $R\in [0,1]$. Then
\begin{equation}\label{radial short}
\left(\frac{2(N+\alpha)}{p+1}-(N-2)\right)\ddfrac{2(p+1)}{p-1} C_{\alpha,\textrm{rad}}\leq A_\alpha(R)+B_\alpha
\end{equation} 
where
\begin{equation}\label{A}
A_\alpha(R)= K_*(N,p) R^{\beta+1} \ddfrac{\alpha \Gamma(\alpha)}{\Gamma(\alpha+\beta+1)} \left(C_{\alpha,\textrm{rad}}\right)^{\frac{p+1}{2}} 
\end{equation} 
and
\begin{equation}\label{B}
B_\alpha= K^*(N,p)\ddfrac{\left(C_{\alpha,\textrm{rad}}\right)^{2p/(p+1)}}{(\alpha+1)^{2/(p+1)}}.
\end{equation} 
\end{lem}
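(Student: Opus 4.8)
The statement is obtained simply by sandwiching the boundary energy $\int_{\partial B}|Du_{\alpha,\textrm{rad}}|^2\dif\sigma$ between the lower bounds of Lemmas \ref{lemma2} and \ref{lemma2'} and the upper bound of Lemma \ref{lemma1}; all the analytic work has already been done in those three lemmas, so the present proof is a bookkeeping step. The plan is as follows.

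First I would recall that for $N\geq 3$ Lemma \ref{lemma2} yields
\[
\left(\frac{2(N+\alpha)}{p+1}-(N-2)\right)\frac{2(p+1)}{p-1}\,C_{\alpha,\textrm{rad}} - K_*(N,p)\,R^{\beta+1}\frac{\alpha\,\Gamma(\alpha)}{\Gamma(\alpha+\beta+1)}\,(C_{\alpha,\textrm{rad}})^{\frac{p+1}{2}} \leq \int_{\partial B}|Du_{\alpha,\textrm{rad}}|^2\dif\sigma,
\]
i.e. the left-hand side of \eqref{radial short} minus $A_\alpha(R)$ is bounded above by the boundary energy. For $N=2$ the analogous statement is Lemma \ref{lemma2'}, whose leading coefficient $\frac{4(2+\alpha)}{p-1}$ is precisely $\left(\frac{2(N+\alpha)}{p+1}-(N-2)\right)\frac{2(p+1)}{p-1}$ evaluated at $N=2$ (since then $\frac{2(N+\alpha)}{p+1}-(N-2)=\frac{2(2+\alpha)}{p+1}$), with $K_*$ and $\beta$ replaced by their $N=2$ versions; so the two cases can be treated uniformly.

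Second I would invoke Lemma \ref{lemma1}, which bounds $\int_{\partial B}|Du_{\alpha,\textrm{rad}}|^2\dif\sigma$ from above by $B_\alpha=K^*(N,p)(C_{\alpha,\textrm{rad}})^{2p/(p+1)}(\alpha+1)^{-2/(p+1)}$. Chaining the two inequalities and transposing the Euler--Beta remainder term — which by the definition \eqref{A} is exactly $A_\alpha(R)$ — to the right-hand side produces \eqref{radial short}, uniformly in $R\in[0,1]$ and for every $N\geq 2$. There is no genuine obstacle in this step: the only point to watch is the algebraic identification of the $N=2$ coefficient with the general one, and the observation that the two preceding lemmas are stated precisely so that their left-hand sides agree; the substance of the argument — the Pohozaev and Nehari identities, Ni's inequality in the form \eqref{Niineq} (respectively its logarithmic low-dimensional counterpart \eqref{nonlog} from \cite{BonheureSerraTarallo}), and the reduction $I_R=R^{\beta+1}\Gamma(\alpha)\Gamma(\beta+1)/\Gamma(\alpha+\beta+1)$ — has all been carried out already.
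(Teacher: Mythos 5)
Your proof is correct and is exactly the paper's argument: Lemma~\ref{lemma3} is proved there in one line by combining Lemma~\ref{lemma1} (upper bound $B_\alpha$) with Lemma~\ref{lemma2} for $N\geq3$ and Lemma~\ref{lemma2'} for $N=2$ (lower bound with remainder $A_\alpha(R)$), then transposing the remainder. Your observation that $\frac{4(2+\alpha)}{p-1}$ is the $N=2$ specialisation of $\left(\frac{2(N+\alpha)}{p+1}-(N-2)\right)\frac{2(p+1)}{p-1}$ is the correct reconciliation of the two cases.
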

\begin{proof}

Combining Lemma \ref{lemma1} with Lemma \ref{lemma2} (for $N\geq3$) and Lemma \ref{lemma2'} (for $N=2$), we immediately obtain the estimate.
\end{proof}

\subsection*{Estimate for $S_{\alpha,\textrm{rad}}$ when $R=1$ and $p=1$} 

This estimate will be useful in the proof of Proposition \ref{R1}.
In the case $p=1$ our PDE becomes an eigenvalue problem, and since the scaling $ u_{\alpha,\textrm{rad}}=\left(S_{\alpha,\textrm{rad}}\right)^{\frac{1}{p-1}}u^*_{\alpha,\textrm{rad}}$ is not defined, we provide directly an estimate for $S_{\alpha,\textrm{rad}}.$

\begin{lem}\label{p1} Let $N\geq 3$, $R=1$ and $p=1$. There exists $C>0$ such that
\begin{equation} \label{onedimGround}
\liminf_{\alpha\rightarrow \infty}\alpha^{-2}S_{\alpha,\textrm{rad}} \geq C. 
\end{equation}
\end{lem}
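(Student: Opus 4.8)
The plan is to estimate the Rayleigh quotient defining $S_{\alpha,\textrm{rad}}$ from below directly, by bounding the denominator pointwise via Ni's inequality \eqref{Niineq}, exactly as in the proof of Lemma \ref{lemma2}. Recall that with $R=1$ the weight is $V(|x|)=(1-|x|)^\alpha$, so that
\[
S_{\alpha,\textrm{rad}}=\inf_{0\neq u\in H^1_{0,\textrm{rad}}(B)}\frac{\int_B|Du|^2\dif x}{\int_B (1-|x|)^\alpha u^2\dif x},
\]
which for $p=1$ is the first Dirichlet eigenvalue of $-\Delta$ with weight $(1-|x|)^\alpha$ restricted to the radial class.

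First I would fix an arbitrary radial $u\in H^1_{0,\textrm{rad}}(B)\setminus\{0\}$ and apply Ni's inequality in the form $u(x)^2\le \frac{1}{|\mathbb S^{N-1}|(N-2)}\,|x|^{-(N-2)}\|Du\|_{L^2(B)}^2$, which is legitimate precisely because $N\ge 3$. Inserting this into the denominator and passing to polar coordinates, the factor $|\mathbb S^{N-1}|$ cancels and one is left with
\[
\int_B (1-|x|)^\alpha u^2\dif x\le \frac{\|Du\|_{L^2(B)}^2}{N-2}\int_0^1 (1-r)^\alpha r\dif r=\frac{\|Du\|_{L^2(B)}^2}{N-2}\cdot\frac{\Gamma(2)\Gamma(\alpha+1)}{\Gamma(\alpha+3)}=\frac{\|Du\|_{L^2(B)}^2}{(N-2)(\alpha+1)(\alpha+2)},
\]
using the Beta-function identity $\int_0^1 r(1-r)^\alpha\dif r=B(2,\alpha+1)$ exactly as in Lemma \ref{lemma2}. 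Rearranging and taking the infimum over $u$ gives the $\alpha$-uniform bound $S_{\alpha,\textrm{rad}}\ge (N-2)(\alpha+1)(\alpha+2)$ for every $\alpha>0$.

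Finally, dividing by $\alpha^2$ and letting $\alpha\to\infty$ yields $\liminf_{\alpha\to\infty}\alpha^{-2}S_{\alpha,\textrm{rad}}\ge N-2$, so the lemma holds with $C=N-2$ (any smaller positive constant works as well). I do not anticipate a genuine obstacle here: the only points to keep in mind are that Ni's inequality is available only on the radial class, which is exactly the class over which $S_{\alpha,\textrm{rad}}$ is defined, and that the hypothesis $N\ge 3$ is precisely what makes the singular weight $|x|^{-(N-2)}$ integrable against $(1-|x|)^\alpha r^{N-1}$ near the origin.
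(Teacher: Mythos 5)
Your proof is correct and follows essentially the same route as the paper's: both apply Ni's pointwise radial inequality to the denominator, reduce to $\int_0^1 r(1-r)^\alpha\,dr = \frac{1}{(\alpha+1)(\alpha+2)}$, and deduce the quadratic lower bound on $S_{\alpha,\mathrm{rad}}$. Your version is marginally cleaner in that it works directly with the infimum over arbitrary radial $u$ (rather than picking an optimiser) and makes the constant $C=N-2$ explicit, but there is no substantive difference.
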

\begin{proof}
Since $R=1$, by \cite{Gidas}, $S_{\alpha,\textrm{rad}}=S_\alpha$. Pick a radial optimiser $u_\alpha \in H^{1}_0(B)$ for $S_{\alpha,\textrm{rad}}.$
By using Ni's inequality (\ref{Niineq}) we have
$$\displaystyle{\int_{0}^{1} } (1-r)^\alpha |u_{\alpha}(r)|^2 r^{N-1} \dif r \leq C' \|Du_\alpha\|^2_{L^2(B)}\displaystyle{\int^1_0}(1- r)^{\alpha} r \dif r =C' \ddfrac{\|Du_\alpha\|^2_{L^2(B)}}{\alpha^2+3\alpha+2}.$$
This immediately implies that there exists $C>0$ such that
	\begin{equation*}
\liminf_{\alpha\rightarrow \infty}\alpha^{-2}\ddfrac{\int_B |D u_\alpha|^2 \dif x}{\int_B(1-|x|)^\alpha |u_\alpha|^2\dif x} \geq C,
\end{equation*}
and this concludes the proof.
\end{proof}

\section{Quantitative symmetry breaking: Proofs of Theorems \ref{symmetry breaking theorem} and \ref{symmetry breaking theorem2} } \label{symmetry breaking proof}

\begin{proof}[Proof of Theorem \ref{symmetry breaking theorem}]
Let $A_{\alpha}(R)$ and $B_{\alpha}$ as in Lemma \ref{lemma3} and set $$L(R):=\limsup_{\alpha \rightarrow \infty}\ddfrac{A_\alpha(R)}{B_\alpha},$$
to emphasize its dependence on $R$. We distinguish the cases $L(R)$ finite and $L(R)=+\infty.$ 

\medskip

\noindent {\it Case 1:~$L(R)\in [0,+\infty).$ } 
From Lemma \ref{lemma3} and using the relation
\begin{equation}\label{eq:repetS}
S_{\alpha,\textrm{rad}}=\left(\frac{2(p+1)}{p-1}\right)^{\frac{p-1}{p+1}} \left(C_{\alpha, \textrm{rad}}\right)^{\frac{p-1}{p+1}}
\end{equation}
we obtain a lower bound for $S_{\alpha,\textrm{rad}}$ of the form 

$$S_{\alpha,\textrm{rad}} \geq C(N,p,R)\, \alpha^{1+\frac{2}{p+1}}+o(1),\,\qquad\alpha\rightarrow +\infty.$$
Since $1<p$, by this estimate and Lemma \ref{Salpha} 
it follows that there exists $\alpha=\alpha(R)$  such that $$S_\alpha <S_{\alpha,\textrm{rad}}, \quad \textrm{for all} \,\,\alpha>\alpha(R).$$

\noindent {\it Case 2:~$L(R)=+\infty.$ } Here we assume $N \geq 3$ and write the estimate (\ref{radial short}) in the form 
\begin{equation}\label{bad}
\left(\frac{2(N+\alpha)}{p+1}-(N-2)\right)\ddfrac{2(p+1)}{p-1} C_{\alpha,\textrm{rad}}\leq A_\alpha\left[1+\ddfrac{B_\alpha}{A_\alpha}\right].
\end{equation} 
Using again \eqref{eq:repetS}, we infer that
\begin{equation}\label{badd}
1+\ddfrac{1}{\alpha}\left[\ddfrac{2N}{p+1}-(N-2)\right] \ddfrac{p+1}{2}\leq   K(N,p) R^{\beta+1} \ddfrac{ \Gamma(\alpha)}{\Gamma(\alpha+\beta+1)} \left(S_{\alpha,\textrm{rad}}\right)^{\frac{p+1}{2}} \left[1+\ddfrac{B_\alpha}{A_\alpha}\right]
\end{equation} 
where
$$K(N,p)=\ddfrac{p+1}{2}\left(\frac{p-1}{2(p+1)} \right)^{\frac{p+1}{2}} K_*(N,p), $$ namely 
the constant given in \eqref{Kappa}. Classical asymptotic estimates on the Gamma function, see e.g. \cite{Tricomi}, yield 
\begin{equation*}\label{asymptotic}
\ddfrac{ \Gamma(\alpha)}{\Gamma(\alpha+\beta+1)}\sim \alpha^{-\beta-1} \ \textrm{ as } \ \alpha \rightarrow \infty, 
\end{equation*} 
that is
\begin{equation*}\label{asymptotic2}
\ddfrac{ \Gamma(\alpha)}{\Gamma(\alpha+\beta+1)}\sim \alpha^{\frac{p+1}{2}(N-2)-N}  \ \textrm{ as }\ \alpha \rightarrow \infty.
\end{equation*}  
From this and (\ref{badd}) we finally obtain
\begin{equation}\label{asymptotic3}
\liminf_{\alpha\rightarrow \infty} S_{\alpha,\textrm{rad}} \alpha^{N-2-\frac{2N}{p+1}}\geq K(N,p)^{-\frac{2}{p+1}} R^{N-2-\frac{2N}{p+1}} . 
\end{equation}  
Since by Lemma \ref{Salpha} we have
\begin{equation*}
\limsup_{\alpha\rightarrow \infty} S_\alpha \, \alpha^{N-2-\frac{2N}{p+1}} \leq \min\left\{e^{\frac{4}{R(p+1)}}, e^{\frac{4}{(1-R)(p+1)}}\right\}S,
\end{equation*}
and since the hypotheses on $R$ and the sub-criticality of $p$ imply the strict inequality $$\min\left\{e^{\frac{4}{R(p+1)}}, e^{\frac{4}{(1-R)(p+1)}}\right\}S- K(N,p)^{-\frac{2}{p+1}} R^{N-2-\frac{2N}{p+1}}<0,$$
it follows that 
$$\limsup_{\alpha\rightarrow \infty} (S_\alpha-S_{\alpha,\textrm{rad}})<0$$
and this concludes the proof.
\end{proof}

\begin{remark}\label{rem:N=2} If $R=0$, then $A_\alpha =0$. So we are in {\it Case $1$} above, whose arguments works for $N\geq2$. Then we recover, with a slightly different method, the symmetry breaking result for the H\'enon equation \cite[Theorems 3.1 and 4.2]{Smets}, with a unified proof that works for $N\geq 2$. The idea of using the Pohozaev identity to obtain energy bounds in the radial framework has been inspired by \cite{Denis-Ederson-Miguel-JFA}.
\end{remark}

\begin{remark}\label{remarkN2}
We observe that the above argument for $N=2$ in {\it Case $2$} does not imply $S_{\alpha, rad} > S_{\alpha}$ for large values of $\alpha$. In fact carrying out the proof in {\it Case $2$}, using the expression of $\beta=1-(p+1)\varepsilon$  provided by Lemma \ref{lemma2'},
one finds 
$$
\liminf_{\alpha\rightarrow \infty} S_{\alpha,\textrm{rad}} \alpha^{2\varepsilon-\frac{4}{p+1}}\geq C(N,p, R,\varepsilon),
$$
which does not match with the asymptotic estimate provided by Lemma \ref{Salpha}
$$
\limsup_{\alpha\rightarrow \infty} S_\alpha \, \alpha^{-\frac{4}{p+1}} \leq \min\left\{e^{\frac{4}{R(p+1)}}, e^{\frac{4}{(1-R)(p+1)}}\right\}S.
$$
\end{remark}

\begin{proof}[Proof of Theorem \ref{symmetry breaking theorem2}]
Let $A_{\alpha}$ and $B_{\alpha}$ as in Lemma \ref{lemma3} having replaced $R$ with $R(\alpha)$ and set $$L:=\limsup_{\alpha \rightarrow \infty}\ddfrac{A_\alpha}{B_\alpha}.$$
As in the proof of Theorem \ref{symmetry breaking theorem}, symmetry breaking occurs if $L$ is finite.
So let us consider the case $L=+\infty.$  In this case by using Lemma \ref{lemma2'} we have for all $\varepsilon\in\big(0,\frac{2}{p+1}\big)$
\begin{equation}\label{badd2}
1+\ddfrac{2}{\alpha}\leq   C(p,\varepsilon) R(\alpha)^{2-\varepsilon(p+1)} \ddfrac{ \Gamma(\alpha)}{\Gamma(\alpha+2-\varepsilon(p+1))} \left(S_{\alpha,\textrm{rad}}\right)^{\frac{p+1}{2}} \left[1+\ddfrac{B_\alpha}{A_\alpha}\right].
\end{equation} 
Using the asymptotic estimates on the Gamma function \cite{Tricomi}, we obtain that for all $\varepsilon\in\big(0,\frac{2}{p+1}\big)$ it holds that
\begin{equation}\label{asymptoticN2}
\liminf_{\alpha\rightarrow \infty} S_{\alpha,\textrm{rad}}\Big(\frac{R(\alpha)}{\alpha}\Big)^{\frac{4}{p+1}-2\varepsilon}\geq C(p,\varepsilon)>0
\end{equation} 
for some constant $C(p,\varepsilon).$
With the particular choice
$$\varepsilon=\dfrac{4\delta}{(p+1)(2\delta+3)}\in\big(0,\frac{2}{p+1}\big),$$ since

\begin{equation*}\label{asymptoticN22}
\Big(\frac{R(\alpha)}{\alpha}\Big)^{\frac{4}{p+1}-2\varepsilon}=\alpha^{-\frac{4}{p+1}-\varepsilon} \Big(\alpha^\delta R(\alpha)\Big)^{\frac{4}{p+1}-2\varepsilon}\end{equation*} 
we obtain from (\ref{asymptoticN2}) and by the hypothesis on $R(\cdot)$ that 

$$
\liminf_{\alpha\rightarrow \infty} S_{\alpha,\textrm{rad}}\alpha^{-\frac{4}{p+1}-\varepsilon}\geq C'(p,\varepsilon)>0.
$$
This means that the growth of $S_{\alpha,\textrm{rad}}$ is faster than that of $S_\alpha$ provided by Lemma \ref{Salpha} and this concludes the proof.
\end{proof}

\section{Necessary conditions for concentration of groundstates and proofs of Theorem \ref{nec} and of Proposition \ref{R1}}

Throughout this section we consider $N\geq 3$. Before proving Proposition \ref{R1} it is worth making the following remark, which is of independent interest. 
\begin{remark}\label{lowerSbound}
From the proof of Theorem \ref{symmetry breaking theorem} it is clear that when $R=1$ it holds necessarily that $L=+\infty,$  as $L\in [0,+\infty)$ would imply symmetry breaking, a contradiction by the symmetry result of Gidas-Ni-Nirenberg \cite{Gidas}. Therefore 
from (\ref{asymptotic3}) we obtain
\begin{equation}\label{useful R1}
\liminf_{\alpha\rightarrow \infty} S_{\alpha,\textrm{rad}} \alpha^{N-2-\frac{2N}{p+1}}\geq K(N,p)^{-\frac{2}{p+1}}.
\end{equation}
 Since $S_\alpha=S_{\alpha,\textrm{rad}}$ in this case, 
the above estimate combined with Lemma \ref{Salpha} yields the lower bound on $S$ provided in the Remark \ref{lowerCorollary}.
\end{remark} 
\begin{proof}[Proof of Proposition \ref{R1}]
Following the Remark \ref{lowerSbound}, the Gidas-Ni-Nirenberg result \cite{Gidas} implies that $S_\alpha=S_{\alpha,\textrm{rad}}$ and the double sided growth estimates for $S_\alpha$ holds, namely
(\ref{useful R1}) in addition to Lemma \ref{Salpha}. Picking a groundstate $u_\alpha,$  by the Nehary identity it follows 
$$
\displaystyle{\int_{B} |D u_\alpha|^2\dif x}=\left(S_{\alpha}\right)^{\frac{p+1}{p-1}}.
$$
Hence, setting $\displaystyle{v_\alpha(x):=\alpha^{\frac{2}{1-p}} u_\alpha \left(\frac{x}{\alpha}\right),}$
we obtain
\begin{equation}\label{carab}
C_1\leq \int_{B_\alpha} |D v_\alpha|^2\dif x\leq C_2
\end{equation}
for some constant $C_1,C_2>0.$ Using \eqref{onedimGround} from Lemma \ref{p1} we have,  as in \cite{CaoPeng} p. 475, 

$$\displaystyle{\int_{B} } (1-|x|)^\alpha |u_{\alpha}|^2  \dif x \leq C \alpha^{-2} \displaystyle{\int_{B} }  |D u_{\alpha}|^2  \dif x.$$
This implies for some $C>0$ that
$$\displaystyle{\int_{B_\alpha} } \left(1-\Big|\frac{x}{\alpha}\Big|\right)^{\alpha} |v_{\alpha}|^2  \dif x \leq C \displaystyle{\int_{B_{\alpha}} }  |D v_{\alpha}|^2  \dif x\leq C'_2,$$
and finally, by Nehari's identity and (\ref{carab}) we obtain
\begin{equation}\label{Z1}
C_1 \leq \max_{\overline{B}_{\alpha}} |v_{\alpha}|^{p-1} \displaystyle{\int_{B_{\alpha}} } \left(1-\Big|\frac{x}{\alpha}\Big|\right)^{\alpha} |v_{\alpha}|^2  \dif x\leq C'_2\max_{\overline{B}_{\alpha}} |v_{\alpha}|^{p-1} .
\end{equation}
On the other hand, since $v_{\alpha}\in H^1_0(B_{\alpha})$ satisfies $-\Delta v_{\alpha} \leq v^p_{\alpha},$
by classical Moser's iteration we can show that the uniform bound 
\begin{equation}\label{Z2}
\|v_{\alpha}\|_\infty\leq C
\end{equation}
holds. The conclusion follows now immediately by (\ref{Z1}) and (\ref{Z2}).
\end{proof}

For all $R\in[0,1]$ the norm  of groundstate solutions, $\|u_\alpha\|_\infty,$ blows up as $\alpha\rightarrow \infty.$ However, when $R\neq 0,1$, we are able to show only that this occurs at a growth rate which is slower than that required by Theorem \ref{nec}.

\begin{prop} [ $\|u_\alpha\|_\infty$ blows up]\label{blowup prop}
Let $N\geq 1,$ $R\in [0,1]$ and for any $\alpha>0$, denote by $u_\alpha$ a positive groundstate solution of \eqref{eq}. Then $\|u_\alpha\|_{\infty}\rightarrow +\infty$ as $\alpha \rightarrow +\infty$.
\end{prop}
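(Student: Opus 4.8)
The plan is to show that $\|u_\alpha\|_\infty$ cannot remain bounded: the groundstate ``mass'' $\int_B V(|x|)|u_\alpha|^{p+1}\dif x$ stays bounded away from zero uniformly in $\alpha$, while the total mass $\int_B V(|x|)\dif x$ of the weight decays like $1/\alpha$, so the solution is forced to spike.

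First I would record two elementary facts. \emph{(i) A uniform lower bound on $S_\alpha$:} since $0\le V\le 1$ on $B$ for every $R\in[0,1]$, one has $\int_B V(|x|)|u|^{p+1}\dif x\le\int_B|u|^{p+1}\dif x$ for all $u\in H^1_0(B)$; comparing the Rayleigh quotients defining $S_\alpha$ in \eqref{S-nonradial} and $S$ in \eqref{Sobolev} then gives $S_\alpha\ge S>0$ for all $\alpha>0$. \emph{(ii) Decay of the weight:} the one-variable computation carried out in the proof of Lemma \ref{lemma1} is valid for every $N\ge1$ and yields \eqref{V int}, i.e.\ $\int_B V(|x|)\dif x\le |\mathbb S^{N-1}|/(\alpha+1)$.

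Next I would exploit the variational structure of the groundstate. Writing $u_\alpha=S_\alpha^{1/(p-1)}u^*_\alpha$ with $u^*_\alpha$ a positive optimiser of $S_\alpha$ normalised by $\int_B V(|x|)|u^*_\alpha|^{p+1}\dif x=1$ (as in the Introduction), the Nehari identity gives
$$\int_B V(|x|)|u_\alpha|^{p+1}\dif x=\int_B|Du_\alpha|^2\dif x=S_\alpha^{\frac{p+1}{p-1}}\ge S^{\frac{p+1}{p-1}}>0$$
by fact (i), a bound independent of $\alpha$. On the other hand, by fact (ii),
$$\int_B V(|x|)|u_\alpha|^{p+1}\dif x\le\|u_\alpha\|_\infty^{p+1}\int_B V(|x|)\dif x\le\frac{|\mathbb S^{N-1}|}{\alpha+1}\,\|u_\alpha\|_\infty^{p+1}.$$
Combining the two displays,
$$\|u_\alpha\|_\infty^{p+1}\ge\frac{\alpha+1}{|\mathbb S^{N-1}|}\,S^{\frac{p+1}{p-1}}\longrightarrow+\infty\quad\text{as }\alpha\to+\infty,$$
which is the assertion; here $\|u_\alpha\|_\infty$ is attained because $u_\alpha\in C(\overline B)$ by elliptic regularity and the subcriticality of $p$.

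There is no substantial obstacle here: the only steps needing a word of justification are the uniform bound $S_\alpha\ge S$, which rests solely on $V\le1$, and the fact that \eqref{V int} holds in every dimension (the case $N=1$ being a direct integration over $(-1,1)$). The argument is uniform over $R\in[0,1]$, including the endpoints $R=0$ and $R=1$, since both (i) and (ii) hold verbatim there.
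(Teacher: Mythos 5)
Your argument is correct and coincides essentially step for step with the paper's own proof: both rest on the uniform lower bound $S_\alpha\ge S$ (from $V\le1$), Nehari's identity $\int_B V|u_\alpha|^{p+1}\dif x=S_\alpha^{(p+1)/(p-1)}$, and the decay estimate $\int_B V\dif x\le |\mathbb S^{N-1}|/(\alpha+1)$ from \eqref{V int} (with the same one-line check for $N=1$). The only cosmetic difference is that you spell out the chain of inequalities in two displays rather than one.
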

\begin{proof}
Set $\beta_\alpha=\|u_\alpha\|_{\infty}$  and recall that by \eqref{V int}, for $N\geq 2$ we have
$$ \displaystyle{\int_B} V(|x|) \dif x \leq \frac{|\mathbb S^{N-1}|}{\alpha+1}.$$ 
For $N=1$ note that $$ \displaystyle{\int_{-1}^1} V(|x|) \dif x = \frac{2}{\alpha+1} = \frac{|\mathbb S^{0}|}{\alpha+1}.$$ 
\noindent 
Combining these and Nehari's identity we infer that $$
S^{\frac{p+1}{p-1}}\leq \left(S_{\alpha}\right)^{\frac{p+1}{p-1}}=\displaystyle{\int_{B} |D u_\alpha|^2\dif x}=\displaystyle{\int_{B} V(|x|)|u_\alpha|^{p+1}\dif x}\leq  \dfrac{C_N}{\alpha+1}\beta_\alpha^{p+1},$$
which concludes the proof.
\end{proof}

\begin{proof} [Proof of Theorem \ref{nec}] Let $(\alpha_m)$ be a sequence with $\alpha_m \to +\infty$ and, for each $m$, let $u_m$ be a positive groundstate solution of \eqref{eq}. Set $\beta_{m}:=\max_{x\in \overline{B}} u_{m}=u_{m}(x_{m})$ and
$$v_m(x):=\frac{1}{\beta_m} u_m (x_m+\beta_m^{\frac{1-p}{2}}x).$$
Then $v_m\in H^{1}_0(\Omega_m)$, where $\Omega_m:=\beta_m^{\frac{p-1}{2}}\left(B(0,1)-x_m\right),$ and
\begin{equation}\label{scaled ground}
-\Delta v_m=V_{\alpha_m}\left(|x_m+\beta_m^{\frac{1-p}{2}}x|\right)v^p_m, \qquad v_m>0, \qquad |v_m|\leq 1.
\end{equation}

Set $d:=\liminf_{m\rightarrow + \infty} \beta_m^{\frac{p-1}{2}}\textrm{dist}\left(x_m,\partial B\right).$ If $d=0,$ necessarily concentration occurs at the boundary. So below we analyse the case $d>0.$

Set $$\gamma(x):=\liminf_{m\rightarrow + \infty}V_{\alpha_m}(|x_m+\beta_m^{\frac{1-p}{2}}x|).$$ 
Then $\gamma$ is defined on a halfspace $H$ or on $\R^N$ according to whether $d\in(0,\infty)$ or $d=+\infty.$  

If $\gamma$ is not constant, then $(x_m)_{m\in\mathbb N}$ accumulates at a point on $\partial B \cup \{0\},$ as otherwise  $\gamma=0.$ 

Therefore, we consider the case  $\gamma(x)$ is a constant and show that in fact this case does not occur by showing that we get a contradiction. We have sub-cases according to the different values of $d.$ 
If $d= +\infty$ we see that as $m$ gets  large enough, for every fixed $\rho>0$ any ball $B_\rho$ is properly contained in $\Omega_m.$ Classical elliptic estimates and bootstrap show that $v_m$ is uniformly bounded in $C^{2,\delta}_{\textrm{loc}}(\R^N)$ and, as a consequence, a suitable subsequence converges in $C^{2,\delta'}_{\textrm{loc}}(\R^N)$ to some $v$ satisfying 
\begin{equation}\label{whole eq}
-\Delta v=\gamma v^p \ \ \textrm{on}\,\,\R^N, \qquad v>0 \ \ \textrm{on}\,\,\R^N, \qquad v(0)= 1.
\end{equation}
\noindent Similarly, if $d\in (0,\infty)$ we find that some nonnegative $v\in C^{2,\delta}_{\textrm{loc}}(\overline{H})$ exists, defined on a halfspace $H$ and satisfying
\begin{equation}\label{half eq}
-\Delta v=\gamma v^p \ \  \textrm{on}\,\,H, \qquad \qquad v>0 \ \ \textrm{on}\,\,H,\qquad v|_{\partial H}=0, \qquad v(0)= 1.
\end{equation}
In the case $\gamma >0$ we get a contradiction, since the classical result of Gidas-Spruck \cite{Gidas0} says that both equations (\ref{whole eq}) and (\ref{half eq}) have no solution. 

If $\gamma=0$ and $d\in (0,\infty)$ again we have contradiction, at this time using the average properties of harmonic functions.

The only possibility left is that $d=+\infty$ and $\gamma=0.$ In this case by classical classification results on positive harmonic functions we conclude that $v\equiv \textrm{constant},$ and therefore $v\equiv 1$ necessarily.
To see that this is a contradiction note that as in (\ref{relation}) and by Nehari's indentity we have
$$
\displaystyle{\int_{B} |D u_m|^2\dif x}=\left(S_{\alpha_{m}}\right)^{\frac{p+1}{p-1}}.
$$
Hence using Lemma \ref{Salpha} to estimate $S_{\alpha_m}$ we have 
\begin{equation*}
\begin{array}{ll}
\displaystyle{\int_{\R^N} \!|D v_m|^2\dif x}\!\!\!\!&=\beta_m^{{\frac{p(N-2)-N-2}{2}}}\displaystyle{\int_{B} |D u_m|^2\dif x} \\ \\
&\leq C(N,p,R) \,\beta_m^{\frac{p(N-2)-N-2}{2}}\alpha_m^{\frac{2N+(2-N)(p+1)}{p-1}} \!= C(N,p,R)\left(\alpha_m^{\frac{2}{p-1}}\beta_m^{-1}\right)^{\frac{N+2-p(N-2)}{2}}.
\end{array}
	\end{equation*}
Then, from \eqref{necessarygrowth}, it follows that up to subsequences, $(v_m)_{m\in \mathbb N}$ is bounded in $D^{1,2}(\R^N)$ and $v_m \rightharpoonup v \in D^{1,2}(\R^N).$ 
As $v$ is harmonic, by Sobolev's inequality it follows that $v\equiv 0,$ a contradiction. This concludes the proof.
\end{proof}

\section{Final remarks and related problems}
There are a number of questions related to our work which we believe it is worth studying in future projects. Here below we list some of them.

\begin{itemize}
\item  How close to $\partial B$ the zeros of $V,$ i.e. $|x|=R,$ are allowed to be in order for symmetry breaking to occur?
\item To prove (or disprove) that \eqref{necessarygrowth} holds for every $R \in (0,1)$.
\item To study the limiting profile of groundstates when $\alpha\rightarrow \infty.$
\item Let $R \in (0,1)$. May groundstate solutions concentrate simultaneously at the origin and at $(1,0, \ldots, 0)$?
\item  Is it possible to obtain similar results replacing $V$ with an oscillating function, e.g.  $|\cos(k\pi|x|)|^\alpha$?
\end{itemize}

\section*{Acknowledgements}
Carlo Mercuri was partially supported by Santander Mobility Grants; he would like to thank Cristina Montero Mudresh from the International Development Office - Swansea University, for the kind advice, and the Instituto de Ci\^encias Matem\'aticas e de Computa\c{c}\~ao - USP S\~ao Carlos, for the warm hospitality when this project has started.  Ederson Moreira dos Santos was partially supported by CNPq grant 307358/2015-1 and FAPESP grant 2015/17096-6. We thank both anonymous Referees for the careful reading of the first version of this paper. 

\begin{bibdiv}

\begin{biblist}

\bib{Anello}{article}{
   author={G. Anello},
    author={F. Faraci},
    author={A. Iannizzotto},
           title={On a problem of Huang concerning best constants in Sobolev embeddings},
   journal={ Ann. Mat. Pura Appl.},
   volume={4},
   date={2015},
   number={3},
   pages={767--779}
}

\bib{batt-f-horst}{article}{
author={J.~Batt}, author={W.~Faltenbacher}, author={E.~Horst},
title={Stationary spherically symmetric models in stellar dynamics},
journal={Arch. Rational Mech. Anal.}, 
volume={93},
date={1986},
number={2},
pages={159--183}
}

\bib{batt}{article}{
author={J{{\"u}}rgen Batt},
title={Global symmetric solutions of the initial value problem of stellar
  dynamics},
 journal={J. Differential Equations}, 
 volume={25},
 date={1977},
 number={3},
 pages={342--364}
 }

\bib{batt-li}{article}{
author={J{{\"u}}rgen Batt},
author= {Yi~Li},
title={The positive solutions of the {M}atukuma equation and the problem of
  finite radius and finite mass},
journal={Arch. Ration. Mech. Anal.}, 
volume={198},
date={2010},
number={2},
pages={613--675}
}

\bib{BWW}{article}{
   author={T. Bartsch},
    author={T. Weth},
    author={M. Willem}
       title={Partial symmetry of least energy nodal solutions to some variational problems},
   journal={J. Anal. Math.},
   volume={96},
   date={2005},
   pages={1--18}
}

\bib{Denis-Ederson-Miguel-JFA}{article}{
   author={D. Bonheure},
    author={E. Moreira dos Santos},
    author={M. Ramos},
    
       title={Symmetry and symmetry breaking for ground state solutions of some strongly coupled elliptic systems},
   journal={Journal of Functional Analysis},
   volume={264},
   date={2012},
   number={1},
   pages={62--96}
}

\bib{BonheureSerraTarallo}{article}{
author={D. Bonheure},
author={E. Serra},
author={M. Tarallo},

title={Symmetry of extremal functions in Moser-Trudinger inequalities and a H\'enon type problem in dimension two},
journal={Advances in Differential Equations},
volume={13},
date={2008}, 
number={1--2}, 
pages={105--138}
}

\bib{BW}{article}{
   author={J. Byeon},
    author={Z.Q. Wang},

       title={On the H\'enon equation: asymptotic profile of ground states. I,}
   journal={Ann. Inst. H. Poincar\'e Anal. Non Lin\'eaire},
   volume={23},
   date={2006},
   number={6},
   pages={803--828}
}

\bib{CaoPeng}{article}{
   author={D. Cao},
    author={S. Peng},
    author={S. Yan},
    
       title={Asymptotic behaviour of ground state solutions for the H\'enon equation},
   journal={IMA J. Appl. Math.},
   volume={74},
   date={2009},
   number={3},
   pages={468--480}
}

\bib{Stellar}{book}{
   author={S. Chandrasekhar},
   title={An introduction to the study of stellar structure},
   publisher={Dover Publications, Inc., New York, N. Y.},
   date={1957}
}

\bib{Gidas0}{article}{
   author={B. Gidas},
    author={J. Spruck},
       title={A priori bounds for positive solutions of nonlinear elliptic equations},
   journal={ Comm. PDE},
   volume={6},
   date={1981},
   pages={883--901}
}

\bib{Gidas}{article}{
   author={B. Gidas},
    author={W.-M. Ni},
    author={L. Nirenberg}
       title={Symmetry and related properties via the Maximum Principle },
   journal={ Communications in Mathematical Physics},
   volume={68},
   date={1979},
   pages={209--243}
}

\bib{Haw}{book}{
   author={S. W. Hawking},
   author={G. F. Ellis}
   title={The large scale structure of space-time},
   publisher={Cambridge Monographs on Mathematical Physics, No. 1. Cambridge University Press, London-New York,}
   date={1973}
}

\bib{Henon}{article}{
   author={M. H\'enon},
       title={Numerical experiments on the stability of spherical stellar systems,}
   journal={Astronomy and Astrophysics },
   volume={24},
   date={1973},
   pages={229--238}
}

\bib{li}{article}{
author={Yi~Li},
title={On the positive solutions of the {M}atukuma equation},
journal={Duke Math. J.},
volume={70},
date={1993},
number={3},
pages={575--589}
}

\bib{li-santanilla}{article}{
author={Yi~Li},
author={Jairo Santanilla},
title={Existence and nonexistence of positive singular solutions for
  semilinear elliptic problems with applications in astrophysics},
  journal={Differential Integral Equations}, 
  volume={8},
  date={1995},
  number={6},
  pages={1369--1383}
  }

\bib{existBHGC}{article}{
author={T.J. Maccarone},
author={A.~Kundu},
author={S.E. Zepf},
author={K.L. Rhode},
title={A black hole in a globular cluster},
journal={Nature},
volume={445},
date={2007},
pages={183--185}
}

\bib{pacella-indiana}{article}{
author={E.~Moreira dos Santos},
author={F. Pacella},
title={H\'{e}non-type equations and concentration on spheres},
journal={Indiana Univ. Math. J.},
volume={65},
date={2016},
number={1},
pages={273--306}
}

\bib{Ni}{article}{
   author={W.M. Ni},
       title={A nonlinear Dirichlet problem on the unit ball and its applications },
   journal={Indiana Univ. Math. J. },
   volume={31},
   date={1982},
   number={6},
   pages={801--807}
}

\bib{peebles2}{article}{
author={P.I.E. Peebles},
title={Black holes are where you find them},
journal={General Relativity and Gravitation},
volume={3},
date={1972},
pages={63--82}
}

\bib{peebles1}{article}{
author={P.I.E. Peebles},
title={Star distribution near a collapsed object},
journal={Astrophysical Journal},
volume={178},
date={1972},
pages={371--376}
}

\bib{Smets}{article}{
   author={D. Smets},
    author={J. Su},
    author={M. Willem}
       title={Non-radial ground states for the H\'enon equation },
   journal={ Communications in Contemporary Mathematics},
   volume={4},
   date={2002},
   number={3},
   pages={467--480}
}

\bib{Strauss}{article}{
   author={W. Strauss},
       title={Existence of solitary waves in higher dimensions },
   journal={ Comm. Math. Phys},
   volume={55},
   date={1977},
   number={2},
   pages={149--162}
}

\bib{Talenti }{article} {
			AUTHOR = {G. Talenti},
			TITLE = {Best constant in {S}obolev inequality},
			JOURNAL = {Ann. Mat. Pura Appl. (4)},
			FJOURNAL = {Annali di Matematica Pura ed Applicata. Serie Quarta},
			VOLUME = {110},
			YEAR = {1976},
			PAGES = {353--372},
		}

\bib{Tricomi}{article}{
   author={F. Tricomi},
    author={A. Erd\'eli},
       title={The asymptotic expansions of a ratio of Gamma functions },
   journal={Pacific Journal of Mathematics },
   volume={1},
   date={1951},
   number={1},
   pages={133--142}
}

\bib{Wald}{book}{
   author={R. Wald},
   title={General Relativity},
   publisher={University of Chicago Press, Chicago,},
   date={1984}
}

\bib{Pic-blackhole}{article}{
title={First M87 Event Horizon Telescope Results. I. The Shadow of the Supermassive Black Hole },
journal={The Astrophysical Journal Letters },
   volume={875},
   date={2019},
   number={1},
   pages={L1}
   }

\end{biblist}

\end{bibdiv}

\end{document}